\newcommand{\gr}{\operatorname{Gr}}
\newcommand{\Gr}{\operatorname{Gr}}
\newcommand{\pol}{\operatorname{Pol}}
\newcommand{\Aut}{\operatorname{Aut}}
\newcommand{\End}{\operatorname{End}}
\newcommand{\Pol}{\operatorname{Pol}}
\newcommand{\Clo}{\operatorname{Clo}}
\newcommand{\rA}{\mathbb{A}}
\newcommand{\aF}{\mathbf{F}}
\newcommand{\aA}{\mathbf{A}}
\newcommand{\aB}{\mathbf{B}}
\newcommand{\cC}{\mathscr{C}}
\newcommand{\cD}{\mathscr{D}}
\newcommand{\set}[1]{\{#1\}}
\newcommand{\ignore}[1]{}
\newtheorem{thm}{Theorem}[section]
\newtheorem{lem}[thm]{Lemma}
\newtheorem{prop}[thm]{Proposition}
\newtheorem{cor}[thm]{Corollary}
\theoremstyle{definition}
\newtheorem{definition}[thm]{Definition}
\theoremstyle{definition}
\newtheorem{notation}[thm]{Notation}
\title{Pseudo-loop conditions}
\author{Pierre Gillibert}
	\address{Institut f\"{u}r Diskrete Mathematik und Geometrie, FG Algebra, TU Wien, Austria}    
	\email{pgillibert@yahoo.fr}
\author{Julius Jonu\v{s}as}
	\address{Institut f\"{u}r Diskrete Mathematik und Geometrie, FG Algebra, TU Wien, Austria}    
	\email{j.jonusas@gmail.com}
\author{Michael Pinsker}
	\address{Institut f\"{u}r Diskrete Mathematik und Geometrie, FG Algebra, TU Wien, Austria, and Department of Algebra, Charles University, Czech Republic}    
	\email{marula@gmx.at}
    \urladdr{http://dmg.tuwien.ac.at/pinsker/}
    \thanks{All authors have received funding from the  Austrian Science Fund (FWF) through  project No P27600. Michael Pinsker has received funding from the Czech Science Foundation (grant No 18-20123S)}
\date{\today}
\begin{document}
\begin{abstract}
	About a decade ago, it was realized that the satisfaction of a given \emph{identity} (or \emph{equation}) of the form $f(x_1,\ldots,x_n)\approx f(y_1,\ldots,y_n)$ in an algebra is equivalent to the algebra forcing a loop into any graph on which it acts and which contains a certain finite subgraph associated with the identity. Such identities have since also been called \emph{loop conditions}, and this characterisation has produced spectacular results in universal algebra, such as the satisfaction of a \emph{Siggers identity} $s(x,y,z,x)\approx s(y,x,y,z)$ in any arbitrary non-trivial finite idempotent algebra.
	
        We initiate, from this viewpoint, the systematic study of sets of identities of the form $f(x_{1,1},\ldots,x_{1,n})\approx\cdots\approx f(x_{m,1},\ldots,x_{m,n})$, which we call \emph{loop conditions of
        width $m$}. We show that their satisfaction in an algebra is equivalent to any action of the algebra on a certain type of  relation forcing a constant tuple into the relation. Proving that for each fixed width $m$ there is a weakest loop condition (i.e., one entailed by all others), we obtain a new and short proof of the recent celebrated result stating that there exists a concrete loop condition of width~3 which is entailed in any non-trivial idempotent, possibly infinite, algebra. The framework of classical (width~2) loop conditions is insufficient for such proof.

        We then consider pseudo-loop conditions of finite width, a generalisation suitable for non-idempotent algebras; they are of the form $u_1\circ f(x_{1,1},\ldots,x_{1,n})\approx\cdots\approx u_m\circ  f(x_{m,1},\ldots,x_{m,n})$, and of central importance for the structure of algebras associated with $\omega$-categorical structures. We show that for the latter, satisfaction of a pseudo-loop condition is characterised by \emph{pseudo-loops}, i.e., loops modulo the action of the automorphism group, and that a weakest pseudo-loop condition exists (for $\omega$-categorical cores). This way we obtain a new and short proof of the theorem that the satisfaction of any non-trivial identities of height~1 in such algebras implies the satisfaction of a fixed single identity.



\end{abstract}
\maketitle

\section{Introduction}

\subsection{Mal'cev conditions}\label{sec:mal}
One of the fundamental goals of universal algebra is to draw connections between \emph{identities} which are satisfied in an algebra
$\aA$ and its \emph{structure}. An \emph{identity} (or \emph{equation}) is a formal
expression of the form
\begin{equation}\label{eq:intro1}
  u(x_1,\ldots,x_n)\approx v(y_1,\ldots,y_m),
\end{equation}
where $u,v$ are abstract \emph{terms} (in the sense of most standard textbooks
on mathematical logic, e.g., \cite{Hodges}) over some functional signature, and
$x_1, \ldots, x_n, y_1, \ldots, y_m$ are not necessarily distinct variables.
Alternatively, a logician may also say that an identity is an atomic
first-order formula over a functional signature.  The identity \eqref{eq:intro1} 
is \emph{satisfied} in an algebra $\aA$ if the functional
symbols which appear in $u$ and $v$ can be assigned \emph{term operations} of
the algebra $\aA$ in such a way that the resulting equation is true for all values
of the variables $x_1,\ldots,x_n,y_1,\ldots,y_m$ in the domain of $\aA$.
Similarly, we can define satisfaction of a set of identities in $\aA$. 
For
example, the two identities (known as \emph{Mal'cev's identities}) 
\begin{equation}\label{eq:intro2}
  m(x,x,y)\approx m(y,x,x)\approx y
\end{equation}
are satisfied in any group $(A,+,-)$: one can simply assign to $m$ the term
operation $ (x,y,z)\mapsto x-y+z$.  A set of identities is \emph{non-trivial}
if it cannot be satisfied in any algebra on an at least two-element domain whose only fundamental operations are projections.  

When we wish to relate the structure of an algebra $\aA$ with the identities it satisfies, the
precise meaning of `structure' varies 
depending on the context, and might refer, for example, to the structure of the lattice of its
congruence relations, the growth rate (in $n$) of the number of
subalgebras of its finite powers $\aA^n$, the computational complexity
of \emph{Constraint Satisfaction Problems (CSPs)} for sets of relations
invariant under $\aA$,  further identities that must be satisfied by
$\aA$, or which other algebras are contained in the \emph{variety} (a class of algebras closed under subalgebras, factoring, and products) or
the \emph{pseudovariety} (like a variety, but only closed under finite products) generated by $\aA$, just to name a few.

Historically, some of the earliest results of the type described above dealt
with the shape of the lattice of congruence relations of an algebra. For
example, an algebra $\aA$ satisfies the identities \eqref{eq:intro2} if and
only if the variety generated by $\aA$ is \emph{congruence
permutable}~\cite{maltsev-cp}; similarly sets of identities can be used to
classify \emph{congruence distributive}~\cite{jonsson-cd}, \emph{congruence
modular}~\cite{gumm-cm}, and \emph{congruence meet-semidistributive}~\cite{Olsak:2018aa}
varieties. Another recent result in this direction is the characterisation of congruence distributivity by \emph{near unanimity identities} in \emph{finitely related} algebras~\cite{barto-cd}. Following the seminal work of A.~Mal'cev~\cite{maltsev-cp}, finite conjunctions of 
identities are also referred to 
as \emph{strong Mal'cev conditions} (e.g., the aforementioned Mal'cev identities); countable disjunctions of increasingly weak strong Mal'cev conditions are simply called \emph{Mal'cev conditions}~\cite{HobbyMcKenzie}. Our results concern strong Mal'cev conditions of a specific form.

In the past few years, the interest in
the study of Mal'cev conditions has gone beyond universal algebra and has been
to a large extent inspired by applications to Constraint Satisfaction Problems,
a certain type of computational problems parametrized by relational
structures.  
In fact, the celebrated CSP complexity dichotomy for finite
structures~\cite{BulatovFVConjecture,ZhukFVConjecture} draws the line between tractability and hardness via a strong Mal'cev condition, stating that the computational
complexity of the CSP of a finite relational structure $\mathbb{A}$ is in
\textbf{P} if the algebra of all functions \emph{preserving} $\mathbb{A}$ satisfies the identity
\begin{equation}\label{equation:sigg}
  s(x,y,x,z,y,z) = s(y, x, z, x, z, y)\;,\; \quad \text{or equivalently, }\; \quad 
  q(x, y, z, x) = q(y, x, y, z)\; ,
\end{equation}
and the problem is \textbf{NP-complete} otherwise. These identities are referred to as the 6-ary and 4-ary \emph{Siggers' identities}~\cite{Siggers,
KearnesMarkovicMcKenzie}. For finite \emph{idempotent} algebras (i.e., algebras whose only unary term operation is the identity function), satisfaction of either of the Siggers' identities is equivalent to \emph{non-triviality}; here we
call an algebra \emph{non-trivial} if it satisfies any non-trivial set of 
identities. 

Some of the milestone contributions of the recent development in the
study of Mal'cev conditions were results characterising non-trivial algebras, or varieties of non-trivial algebras. This includes the above criterion for non-triviality of finite idempotent algebras, and more generally, a \emph{locally finite} variety (this includes varieties generated by a single algebra) is non-trivial if and only if it satisfies either of the Siggers' identities~\cite{Siggers,
KearnesMarkovicMcKenzie}, or equivalently, \emph{weak near unanimity identities}~\cite{MarotiMcKenzie}. Moreover, a 
finite idempotent algebra is non-trivial if and only if satisfies a \emph{cyclic
identity}~\cite{Cyclic}; and an arbitrary (possibly infinite) idempotent algebra (or locally finite variety) is non-trivial if and only if it satisfies 
\emph{Ol\v{s}\'{a}k's identities}~\cite{olsak-idempotent}, that is the set of identities
\begin{equation}\label{equation:olsak}
  o(x,y,y,y,x,x) \approx o(y,x,y,x,y,x) \approx
  o(y,y,x,x,x,y)\; .
\end{equation}

In addition to the application to CSPs referred to above, Mal'cev conditions might play an important role in the quickly developing field of \emph{Promise Constraint Satisfaction Problems} (or \emph{PCSPs)}, a generalisation of CSPs parametrised by
two relational structures. It has already been shown that if a pair of
finite relational structures is not \emph{preserved} by a function satisfying
Ol\v{s}\'{a}k's identities, then the corresponding PCSP is \textbf{NP-complete}~\cite{Bulin:2018aa}.

Naturally, the characterisation of a given structural property of algebras by identities involves the quest for weakest identities satisfied by algebras with this property. For two sets of identities $S$ and $S'$, we say that $S$ is \emph{weaker} than $S'$ , or that $S'$ \emph{implies} $S$, if every algebra  which satisfies $S'$ must also
satisfy $S$; they are \emph{equivalent} if $S$ is weaker than $S'$ and vice-versa. We also use these notions for restricted classes of algebras. For example, Siggers' identities are, for finite idempotent algebras, weaker than any other  non-trivial set of identities, and Ol\v{s}\'{a}k's identities even have this property for all idempotent algebras. The two Siggers' identities are equivalent, even for arbitrary algebras, by a deep result recently announced by M.~Ol\v{s}\'{a}k~\cite{olsak-loop2}.

\subsection{Loop conditions for graphs}
Some important Mal'cev conditions, for example the above-mentioned Siggers'
identities~\eqref{equation:sigg} and any of the \emph{cyclic identities}, are given by
a single \emph{height~1} (in short h1) identity
\begin{equation}\label{equation:h1}
  f(x_1,\ldots,x_n)\approx f(y_1,\ldots,y_n);
\end{equation}
height~1 refers to the fact that $f$ is a single functional symbol, and not an arbitrary, possibly more complex, formal term over some language (as in~\eqref{eq:intro1}). It was observed in~\cite{Siggers} that the satisfaction of such
an identity in a variety can be characterised by the property that algebras of
the variety force the existence of a loop in certain invariant graphs; in the
case of locally finite varieties, certain finite graphs.  As a consequence, identities
of the form~\eqref{equation:h1} are now sometimes referred to as \emph{loop
conditions}; we, however, provide a more general notion of loop conditions in
the next section.  The aforementioned observation provides a systematic method
for proving that a given single identity holds in a given variety: it is
sufficient to show that the associated graphs indeed always contain a loop. It is worth noting that the latter  
property had in itself already proven useful earlier in the context of Constraint
Satisfaction Problems~\cite{BulatovHColoring}, since the CSP of a graph with a loop is always solvable in constant time. 
The characterisation of identities via loops was then, for example, exploited
in~\cite{KearnesMarkovicMcKenzie} to provide a criterion of non-triviality via
4-ary Siggers' terms in locally finite varieties.  Moreover, the characterisation provides a method for comparing the \emph{relative
strength} of identities, an undertaking which was started
in~\cite{olsak-loop} and culminated in the surprising equivalence of a large
number of identities (in particular, the 4-ary and 6-ary Siggers' identities)
in~\cite{olsak-loop2}. 

\subsection{Loop conditions of finite width}
On the other hand, it is known that some Mal'cev conditions are not equivalent
to a single identity, and thus escape the method described in the previous
section. This includes, in particular, \emph{Ol\v{s}\'{a}k's
identities}~\eqref{equation:olsak}, which provides a beautiful characterisation of non-trivial
idempotent varieties: while M.~Ol\v{s}\'{a}k provided in~\cite{olsak-idempotent} the
long-sought\footnote{In fact, probably most effort was, in vain of course,
directed towards a proof of the negation of this fact.} proof of the fact that
every non-trivial idempotent variety satisfies a fixed non-trivial strong Mal'cev
condition -- for example the identities~\eqref{equation:olsak}. It has
been shown that no such condition can be given by a single
identity~\cite{Kazda-Taylor}. The proof in~\cite{olsak-idempotent} therefore
had to make use of an ad hoc adaptation of the loop technique to these
identities.

In this article we initiate the systematic study of this type of situation by
investigating, in full generality, sets of h1 identities of the form 
$$
f(x_{1,1},\ldots,x_{1,n})\approx\cdots\approx f(x_{m,1},\ldots,x_{m,n})\;.
$$
We call the number $m$ of occurrences of $f$ the \emph{width} of the set, and the set itself a \emph{loop condition of width $m$}.  We are going to show that satisfaction
of a loop condition of width $m$ in a variety can be characterised by the algebras of
the variety forcing a constant tuple into certain $m$-ary relations -- a
straightforward generalisation of a loop in graphs. Similarly to the situation
for undirected graphs~\cite{olsak-loop}, we are then going to prove that for each fixed width
$m\geq 2$, there exists a weakest condition of width $m$. We utilize this fact to provide a new, short, and relatively elementary proof of the above-mentioned theorem that non-trivial idempotent varieties can be characterised by a single loop condition (of width~3).

\subsection{Smallness assumptions}

While some notable connections between identities and structure hold for all
algebras, naturally stronger statements have been proven under additional
\emph{smallness assumptions} on the algebra $\aA$. Classical assumptions of
this kind include \emph{finiteness} of the domain of $\aA$, that the domain is
\emph{finitely generated}, that the \emph{term functions} of $\aA$ are finitely
generated, or that $\aA$ generates a \emph{locally finite variety}, i.e., a variety in which any finitely generated algebra is finite.  Even the condition of \emph{idempotency}, one of the classical frameworks in universal algebra, can be viewed as a smallness condition (of the unary functions in an algebra). 

Another,
more indirect, classical condition is \emph{finite relatedness}, which arises
from the fact that the term functions of any finite algebra $\aA$ are the set $\Pol(\mathbb{A})$ of \emph{polymorphisms} of some relational structure $\mathbb{A}$ on its domain; the polymorphisms are the finitary operations \emph{preserving} $\mathbb{A}$. The algebra $\aA$ is a \emph{finitely related}
if $\mathbb{A}$ can be chosen to have only finitely many relations. Some of the most
spectacular results (e.g.,~\cite{barto-cd, barto-cm}) have been proven (and are only true) under this
assumption.

When $\aA$ has an infinite domain, it might not be term equivalent to any
polymorphism algebra, but there is still a unique  polymorphism algebra
$\Pol(\mathbb{A})$ which agrees with the term functions of $\aA$ \emph{locally} (i.e., on
every finite set). In this case, smallness or regularity assumptions on the
relational structure $\mathbb{A}$ which come from outside the field of
universal algebra appear naturally. Such conditions might be, for example,
model-theoretic (e.g., \emph{$\omega$-categoricity}, 
\emph{homogeneity}, \emph{finite boundedness}, \emph{model-completeness}, or \emph{coreness}), combinatorial
(e.g., \emph{Ramsey-type properties}), or group-theoretic (e.g., \emph{oligomorphicity} or a certain 
\emph{orbit growth} of the automorphism group of $\mathbb{A}$) -- see e.g.~\cite{wonderland,BKOPP,Topo,BP-reductsRamsey}. One recent prime source of algebras satisfying such conditions 
are infinite domain Constraint Satisfaction Problems, where the understanding of the structure of polymorphism algebras 
has led to a great number of results of classifying the computational complexity for
various classes of $\omega$-categorical relational structures (e.g., in~\cite{BMPP16,MMSNP}).

\subsection{Pseudo-loop conditions of finite width}\label{subsect:pseudo} It follows from several recent theorems in
this area~\cite{Topo,wonderland,BartoPinskerDichotomyV3} that if the polymorphism algebra of an $\omega$-categorical structure satisfies some non-trivial finite set of h1 identities
\emph{locally} (i.e., on every finite set), then it satisfies the 6-ary
\emph{pseudo-Siggers} identity
\begin{align}
u\circ s(x,y,x,z,y,z)\approx v\circ s(y,x,z,x,z,y)\; ;\label{eq:pseudosiggers}
\end{align}
here, $u$ and $v$ are unary functional symbols, and $s$ is $6$-ary. For a certain subclass of $\omega$-categorical structures, the converse implication holds
as well~\cite{BKOPP-conf,BKOPP}, and it has been conjectured that satisfaction of this identity is the delineation of tractability and hardness of a large class of infinite-domain Constraint Satisfaction
Problems ~\cite{BPP-projective-homomorphisms, Topo}. It is known that the pseudo-Siggers identity, even
in this restricted setting, does not imply any non-trivial set of h1 identities~\cite{BMOOPW}, and hence it is indeed necessary to consider
pseudo-conditions (or other non-h1 alternatives) in this context.

The satisfaction of the pseudo-Siggers identity in the polymorphism algebra of an $\omega$-categorical structure $\mathbb{A}$ can be described by the existence of a \emph{pseudo-loop}, roughly a loop modulo the automorphism group of $\mathbb{A}$, in certain graphs invariant under finite powers of the
algebra. The theorem mentioned above which
derives the identity~(\ref{eq:pseudosiggers}) has been obtained using this
characterisation~\cite{BartoPinskerDichotomyV3, Topo}. Inspired by this fact, 
we consider the pseudo-variant of the loop conditions of finite width: that is,
we study sets of identities of the form
$$
u_1\circ f(x_{1,1},\ldots,x_{1,n})\approx\cdots\approx u_m\circ  f(x_{m,1},\ldots,x_{m,n})\; ,
$$
where $u_1,\ldots,u_m$ are unary function symbols, 
and refer to them as \emph{pseudo-loop conditions}.  Similarly to the case of
graphs, we show that satisfaction of such identities is characterised by pseudo-loops in
relations of finite arity.  We then use this to prove that there is a weakest pseudo-loop condition for polymorphism algebras of $\omega$-categorical cores.

\subsection{Summary of the results and the method}

The first result of this paper is showing that  for each fixed width $m\geq 2$,
there is a weakest non-trivial loop condition.  This
approach allows us to provide a short and, compared to the original one
in~\cite{olsak-idempotent}, elementary proof of the existence of a weakest
non-trivial strong Mal'cev condition for idempotent algebras. We perform a purely syntactic composition in order to
obtain, from a Taylor term (which exists in any non-trivial idempotent algebra by the
classical result due to W.~Taylor~\cite{Taylor}), a term which satisfies some loop
condition of width $3$ (but of unknown arity). Then we simply refer to the above-mentioned existence
of a weakest condition of width $3$. The identities we obtain are not the same as the identities~\eqref{equation:olsak}  obtained in~\cite{olsak-idempotent}; our identities are also of width~3, but have four variables.

The second result is that there exists a weakest pseudo-loop condition (of width
$2$, as it turns out) for polymorphism algebras of $\omega$-categorical core structures. This approach allows us to identify
pseudo-loop conditions which are implied by any pseudo-Taylor term in such algebras. Using this, we derive that there is a specific non-trivial pseudo-loop condition, 
similar to~(\ref{eq:pseudosiggers}), satisfied in any such algebra which satisfies some non-trivial set of h1 identities. 

While this result is weaker than the one mentioned above in
Section~\ref{subsect:pseudo} (which only requires \emph{local}, rather than
global,  satisfaction of non-trivial h1 identities in order to
derive~(\ref{eq:pseudosiggers})), our proof is considerably more elementary
than the proof of the result  in~\cite{BartoPinskerDichotomyV3}: it consists
once again of simple composition and above equivalence of pseudo-loop conditions.
Moreover, our result might pave the way to a simple proof of that theorem: it
would be sufficient to show that local satisfaction of non-trivial h1
identities implies a pseudo-Taylor term (of arbitrary arity), an undertaking
which could well turn out to be not too involved (although, of course, we do
not currently dispose of such proof). We do achieve this in the case where  
the algebra satisfies Taylor identities locally, still avoiding
the most tedious   part of the proof in~\cite{BartoPinskerDichotomyV3}.

\section{Preliminaries}

Throughout this paper we use blackboard bold letters, for example $\mathbb{A}$,
to denote relational structures, and we use bold letters, such as $\mathbf{A}$,
to denote algebras. In both cases, the same letter in plain font, in the examples above $A$, denotes the domain
set of the relational structure or the algebra.

\subsection{Function clones} A \emph{function clone} $\mathscr{C}$ is a set of functions (also called \emph{operations}) of finite arity on
a fixed domain set $C$ which contains for all $1\leq i\leq n$ the function $\pi^n_i :C^n \to C$
given by $\pi^n_i(x_1,\ldots,x_n) = x_i$, called the $i$-th $n$-ary \emph{projection}, and which  is moreover 
closed under composition: that is, whenever $n,m\geq 1$, $f\in \mathscr{C}$ is $n$-ary, and $g_1, \ldots, g_n \in \mathscr{C}$ are $m$-ary, then
$f\circ(g_1, \ldots, g_n) \in \mathscr{C}$ where   $f\circ(g_1, \ldots, g_n)$ is given by 
\[
 (x_1,\ldots,x_m) \mapsto f(g_1(x_1,\ldots, x_m),
  \ldots, g_n(x_1, \ldots, x_m)).
\]
We reserve the font $\mathscr{C}$ to denote function clones, and then write $C$ for their domain. Every function clone is the set $\Clo(\aA)$ of term functions over some algebra $\aA$, and conversely such sets of term functions always form function clones. For $n \geq 1$, we write $\mathscr{C}_{n}$ for the set of all $n$-ary
function in $\mathscr{C}$. 

Denote by $\gr(\mathscr{C})$ the set of all unary
bijective functions of $\mathscr{C}$ whose inverse is also contained in
$\mathscr{C}$. Then $\gr(\mathscr{C})$ is the largest permutation group on $C$ contained in $\mathscr{C}$. We say that $\mathscr{C}$ is
\emph{oligomorphic} if $\gr(\mathscr{C})$ is an oligomorphic permutation group, i.e., its componentwise action on any finite power of $C$ has only finitely many orbits.  

 If $I$ is a set, then $\mathscr{C}$ acts on $C^I$ by
\[
  f((x^1_i)_{i\in I}, \ldots, (x^n_i)_{i\in I}) := (f(x_i^1,\ldots,
  x_i^n))_{i\in I}\; 
\]
where $n\geq 1$, $f\in \mathscr{C}_n$, and $(x_i^j)_{i\in I} \in C^I$ for all $1\leq j\leq n$. The clone $\mathscr{C}$
acting on $C^I$ can thus be thought of as a function clone with the domain $C^I$,
which we will denote by $\mathscr{C}^I$.  Similarly, we define the power $\aA^I$ of an algebra.

\subsection{Identities}

Recall that an \emph{identity} is a formal expression $u(x_1, \ldots, x_n)
\approx v(y_{ 1}, \ldots, y_{m})$ where $u$ and $v$ are abstract terms over
some functional signature $\Omega$ and $x_1, \ldots, x_n, y_1, \ldots, y_n$ are
not necessarily distinct variables. The identity is satisfied in a function
clone $\mathscr{C}$ if the function symbols in $\Omega$ can be assigned
elements from $\mathscr{C}$ so that  $u(x_1, \ldots, x_n) \approx v(y_{ 1},
\ldots, y_{m})$ becomes a true equation between elements of $\mathscr{C}$, that
is the equation holds for all choices of values in $C$ for variables $x_1,
\ldots, x_n, y_1, \ldots, y_n$. 

The identity $u(x_1, \ldots, x_n) \approx v(y_{ 1}, \ldots, y_{m})$
is of \emph{height~1}, in short h1, if $u,v$ are themselves symbols from
$\Omega$, i.e., there is no nesting of functional symbols in $u$ and $v$. A set
of identities is h1 if all of its identities are.
A set of
identities  is satisfied \emph{locally} in a function clone $\mathscr{C}$ if for
every finite $F \subseteq C$ there is an assignment of elements of
$\mathscr{C}$ to the symbols in $\Omega$ such that the identity holds for all
$x_1, \ldots, x_n, y_1, \ldots, y_n \in F$.
 
We say that an algebra $\aA$ satisfies a set of identities if the function
clone of its term functions does (which agrees with the definition in
Section~\ref{sec:mal}). Note that in this notion of satisfaction, there is no
connection between the signature of the identities and the signature of the
algebra -- the functions in the identities are existentially quantified
and range over $\Clo(\aA)$.

More generally, a class of algebras over the same signature $\tau$ satisfies a set of identities over $\Omega$ when there is an assignment from the symbols in $\Omega$ to the formal terms over $\tau$ such that in every algebra of the class this assignment yields term functions satisfying the identity. While this is a stronger requirement than all algebras of the class satisfying the identity (via possibly different terms), the two notions coincide in the case of \emph{varieties}, i.e., classes of algebras closed under products, homomorphic images, and subalgebras, thanks to the existence of a \emph{free algebra} in the class. We remark that a considerable part of the literature is formulated in terms of varieties rather than single algebras; however, we opted to formulate our results using the latter notion, since oligomorphic algebras (i.e., algebras $\aA$ for which $\Clo(\aA)$ is oligomorphic) never form a variety.

An operation $f$ on a set $C$ is \emph{idempotent} if $f(c,\ldots,c)=c$ for all $c\in C$. A function clone (or an algebra) is idempotent if all of its functions are. It was shown by
Taylor~\cite{Taylor} that an idempotent function clone $\mathscr{C}$
satisfies some non-trivial set of identities if and only if there is $n \geq 1$ such
that a set of identities of the form 
\begin{align*}
  t(x_{1,1},\ldots, x_{1,n}) &\approx t(y_{1,1}, \ldots, y_{1, n})\\
  &\ldots\\
  t(x_{n,1},\ldots, x_{n,n}) &\approx t(y_{n,1}, \ldots, y_{n,n})
\end{align*}
is satisfied in $\mathscr{C}$, where $x_{i,i} = x$, $y_{i,i} =
y$, and $x_{i, j}, y_{i, j} \in \{x, y\}$ for all $1\leq i, j\leq n$. Any identities of this form are called \emph{Taylor identities}, and any function witnessing their satisfaction is called a \emph{Taylor function} or, in case $\mathscr{C}$ is viewed as the clone of term functions of an algebra, a \emph{Taylor term}.

\subsection{Relational structures}
A first-order formula is \emph{primitive positive} if it is logically equivalent to an existentially quantified conjunction of atomic formulas. Since we are only going to consider formulas over  relational signatures, the atomic formulas are simply equalities of variables or
relational symbols applied to some variables. We use the usual convention to
write \emph{pp} as a shortcut for primitive positive. 

A relational structure $\mathbb{A}$ is
\emph{$\omega$-categorical} if there is an, up to isomorphism, unique countable model of
the first-order theory of $\mathbb{A}$. When $A$ is countable, this is equivalent to the automorphism
group $\Aut(\mathbb{A})$ of $\mathbb{A}$ being an oligomorphic group.  A
structure $\mathbb{A}$ is a \emph{model-complete core} if every endomorphism of
$\mathbb{A}$ preserves all first-order formulas over $\mathbb{A}$.

When $n,m\geq1$, $R\subseteq A^m$, and $f\colon A^n\rightarrow A$, we say that $f$ \emph{preserves} $R$ if
$r_1,\ldots, r_n \in R$ implies that $f(r_1), \ldots, f(r_n)$, calculated componentwise, is in $R$. An algebra (or a function clone) preserves $R$ if all of its operations do. The \emph{polymorphism clone} of a relational structure $\mathbb{A}$, denoted
$\pol(\mathbb{A})$, is the function clone consisting of all finitary functions on $A$ which preserve all relations of $\mathbb{A}$. 

When a relation has a definition over a structure $\rA$ via a pp formula, it is  preserved by all functions in $\Pol(\rA)$. The converse holds when $\rA$ is at most countable and  $\omega$-categorical, or equivalently, when $\Pol(\rA)$ is oligomorphic. More generally, if $I$ is a set and a relation on $A^I$ (i.e., a subset of $(A^I)^m$ for some $m\geq 1$) is pp-definable in $\rA$, then the relation is preserved by all functions in $\Pol(\rA)^I$. Again, the converse holds when $\rA$ is $\omega$-categorical and $I$ is finite. We call any structure on some power $A^I$ of $A$ all of whose relations are pp-definable in $\rA$ a \emph{pp-power} of $\rA$.

A \emph{subuniverse} of a function clone $\mathscr{C}$ (or an algebra $\aA$) is a set $S \subseteq C$ preserved by all functions in $\mathscr{C}$ (or $\aA$).



\subsection{Topology} 
A function clone $\mathscr{C}$ comes naturally equipped with a topology known
as the \emph{pointwise topology}. The basic open sets in this topology are of the form 
\[
  \{f \in \mathscr{C}_n \;|\; f(a_1^i, \ldots a_n^i) = b^i \
  \text{for all} \ 1\leq i\leq m\}
\]
where $n, m\geq 1$, $b^i\in C$, and $(a_1^i, \ldots,a_n^i)
\in C^n$ for all $1\leq i\leq m$.
Equivalently, the topology on $\mathscr{C}_n\subseteq C^{C^n}$ is the product topology of the
discrete topology on $C$ for all $n \geq 1$, and then the topology on
$\mathscr{C}$ is the disjoint union topology of all $\mathscr{C}_n$. 

A function clone $\cC$ then is (topologically) closed within the clone of all finitary functions on $C$ if and only if it is the polymorphism clone of a relational structure on $C$.

Using topology we can also give an alternative  definition of a model-complete core for $\omega$-categorical structures: an $\omega$-categorical relational structure $\mathbb{A}$ is a model complete core if and only if 
 $\Aut(\mathbb{A})$ is dense in the endomorphism monoid $\End(\mathbb{A})$ of
$\mathbb{A}$~\cite{RandomMinOps}. It can be shown, assuming $\omega$-categoricity, that $\mathbb{A}$ is a model-complete core
if and only if every orbit of $\Aut(\mathbb{A})$ acting on
$A^n$ is pp-definable in $\mathbb{A}$ for each $n \geq 1$; this is the case if and only if every such orbit is preserved by $\Pol(\mathbb{A})$. 

It thus makes sense to call a function clone $\mathscr{C}$ a \emph{core} if $\Gr(\cC)$ is dense in the unary functions of $\cC$. If $\mathscr{C}$ is a core clone, then it is routine to verify
that so is $\mathscr{C}^n$ for every $n \geq 1$.

\subsection{Pseudo-loops} 
For the purpose of this paper a \emph{graph} is a relational structure
with a single binary relation. A \emph{loop} in a graph is an element of its domain which is related to itself. More generally, a loop in a relation (of possibly higher arity) is a constant tuple in that relation.

If $\mathscr{G}$ is a permutation group acting on a set $A$, and $R\subseteq A^m$ is a relation on that set, then a \emph{pseudo-loop of $R$ with respect to $\mathscr{G}$} is a tuple $(a_1,\ldots,a_m)\in R$ such that all $a_1,\ldots,a_m$ all lie in the same orbit of $\mathscr{G}$. A pseudo-loop with respect to the trivial group consisting only of the identity function is called a \emph{loop}; in other words, a loop is a constant tuple. If $\mathscr{A}$ is a function clone on domain $A$, then $(a_1,\ldots,a_m)\in R$ is a \emph{pseudo-loop with respect to $\mathscr{A}$} if it is a pseudo-loop with respect to $\Gr(\mathscr{A})$.

For all $k,m \geq 2$, we define $\mathbb{K}_k^m$ to be the relational structure with domain $D:=\{1,\ldots,k\}$ and a single $m$-ary relation given by 
\[
\{(x_1,\ldots,x_m)\in D^m \;|\; x_i\neq x_j\text{ for some }1\leq i,j\leq m\}\; .
\]
In particular, $\mathbb{K}_k^2$ is just a graph which forms a clique of size $k$.

Let $\mathbb{A}$ and $\mathbb{B}$ be two relational structures both with a
single $m$-ary relation $R$ and $Q$ respectively. A
\emph{homomorphism} from $\mathbb{A}$ to $\mathbb{B}$, or homomorphism from $R$
to $Q$, is $f\colon A \to B$ such that whenever $(x_1,\ldots,x_m) \in R$, then 
$(f(x_1),\ldots, f(x_m)) \in Q$.  We will be interested in structures $\rA$ such that some $\mathbb{K}_n^m$ homomorphically maps into $\rA$. Note that if such a homomorphism is not injective, then $\rA$ has a loop.


\section{A weakest non-trivial idempotent identity}

In this section we provide an alternative proof to the main theorem
of~\cite{olsak-idempotent}, which states that there exists a set of non-trivial identities which any idempotent non-trivial algebra must satisfy.  We start by giving a generalisation of the definition of a \emph{loop condition} from~\cite{olsak-loop}.

\begin{definition}
A \emph{loop condition} is a set $L$ of identities
which is of the form
\[
  f(x_{1,1},\ldots, x_{1,n}) \approx f(x_{2,1},\ldots,
  x_{2, n}) \approx \cdots \approx f(x_{m,1},\ldots,
  x_{m,n})\; ,
\]
where $n\geq 1$, $m\geq 2$, each $x_{i, j}$ is a variable from some finite
set $V$, and $f$ is an $n$-ary function symbol. We call the numbers $n$ and $m$ the \emph{arity} and the \emph{width} of the loop condition, respectively. Then
\[
  R_L := \{(x_{1,i}, \ldots, x_{m, i}) \;|\; 1\leq i\leq n\} \subseteq V^m
\]
is the ($m$-ary) \emph{relation associated with $L$}; it contains $n$ tuples.

Dually, given a relation $R\subseteq V^m$, where $V$ is a finite set and $m\geq 2$, we assign to it a loop condition $L_R$
\[
  f(r_{1,1},\ldots, r_{1,n}) \approx f(r_{2,1},\ldots,
  r_{2, n}) \approx \cdots \approx f(r_{m,1},\ldots,
  r_{m,n})\; ,
\]
where $(r_{1,1},\ldots,r_{m,1}),\ldots,(r_{1,n},\ldots,r_{m,n})$ is an enumeration of the tuples in $R$. The identities depend on the enumeration, but only up to permutation of the variables of $f$. Since we are interested in satisfaction of identities in function clones, and function clones are closed under permutations of variables of their members, we may ignore this technicality, and speak of \emph{the} loop condition $L_R$ associated with $R$.
\end{definition}

Observe that a loop condition $L$ is trivial if and only if $R_L$ contains a loop, namely the tuple $(x_{1,i}, \ldots, x_{m, i})$, where $1\leq i\leq n$ is the coordinate to which the projection satisfying $L$ projects. The next result exhibits the
relation between loop conditions and loops in greater generality.

\begin{prop}[cf. {\cite[Proposition 1]{olsak-loop}}]\label{prop:loop-lem} 
  Let $L$ be a loop condition of width $m\geq 2$, arity $n\geq 1$, and variable set $V$, let $R_L$ be the
  associated relation, and let $\aA$ be an algebra.
  The following are equivalent:
  \begin{enumerate}[label = \emph{(\alph*)}]
      \item $\aA$ satisfies $L$;

      \item for every $R \subseteq (A^{A^V})^m$ preserved
      by $\mathbf{A}^{A^V}$, if there is a homomorphism from $R_L$
      to $R$, then $R$ has a loop;

      \item for every $R \subseteq (A^{A^V})^m$ preserved
      by $\mathbf{A}^{A^V}$, if there is an injective homomorphism from $R_L$
      to $R$, then $R$ has a loop.
  \end{enumerate}
\end{prop}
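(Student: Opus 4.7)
The plan is to establish the cycle $(a) \Rightarrow (b) \Rightarrow (c) \Rightarrow (a)$. The implication $(b) \Rightarrow (c)$ is immediate, since any injective homomorphism is a homomorphism, so the hypothesis of (c) is a special case of the hypothesis of (b).

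For $(a) \Rightarrow (b)$, I would take an $n$-ary term operation $f$ of $\aA$ witnessing $L$. Identities are preserved under arbitrary powers, so $f$, interpreted componentwise in $\aA^{A^V}$, witnesses $L$ there as well. Given a homomorphism $\phi \colon V \to A^{A^V}$ from $R_L$ to $R$, the $n$ tuples $(\phi(x_{1,i}), \ldots, \phi(x_{m,i}))$ with $1 \leq i \leq n$ all lie in $R$ by definition of $\phi$. Applying $f$ to these tuples coordinatewise and using that $R$ is preserved by $\aA^{A^V}$ produces a tuple of $R$ whose $j$-th coordinate is $f(\phi(x_{j,1}), \ldots, \phi(x_{j,n}))$; the identity $L$ forces all these coordinates to coincide, giving a loop.

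For $(c) \Rightarrow (a)$, the idea is to build a canonical test relation on $A^{A^V}$. I embed $V$ into $A^{A^V}$ via $v \mapsto \pi_v$, where $\pi_v$ denotes the projection of $A^V$ onto the $v$-coordinate; assuming $|A| \geq 2$, this map is injective (the case $|A| = 1$ being trivial). The image of $R_L$ under this embedding consists of the $n$ seed tuples $(\pi_{x_{1,i}}, \ldots, \pi_{x_{m,i}})$, and I let $R$ be the subuniverse of $(\aA^{A^V})^m$ generated by these seeds. Then $R$ is preserved by $\aA^{A^V}$ and receives an injective homomorphism from $R_L$, so hypothesis (c) yields a loop $(g, \ldots, g) \in R$.

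The key step, and the one I expect will require the most care, is decoding this loop back into an identity. Because $R$ is generated as a subuniverse of $(\aA^{A^V})^m$ by exactly $n$ seeds, every element of $R$ has the form $(f(\pi_{x_{1,1}}, \ldots, \pi_{x_{1,n}}), \ldots, f(\pi_{x_{m,1}}, \ldots, \pi_{x_{m,n}}))$ for some $n$-ary term operation $f$ of $\aA$. A loop therefore produces an $n$-ary term operation $f$ of $\aA$ whose $m$ coordinates above coincide as functions $A^V \to A$; evaluating this pointwise equality at an arbitrary tuple $(a_v)_{v \in V} \in A^V$ yields the chain of equations $f(a_{x_{1,1}}, \ldots, a_{x_{1,n}}) = \cdots = f(a_{x_{m,1}}, \ldots, a_{x_{m,n}})$, which is precisely the assertion that $\aA$ satisfies $L$ via $f$, closing the cycle.
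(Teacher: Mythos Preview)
Your proposal is correct and follows essentially the same route as the paper's proof: the same cycle of implications, the same use of the projections $\pi_v$ to build the canonical test relation, and the same decoding of a loop as a term operation satisfying $L$. Your description of $R$ as the subuniverse of $(\aA^{A^V})^m$ generated by the seed tuples coincides with the paper's explicit definition $R=\{f(r_1,\ldots,r_n)\mid f\in\Clo(\aA)_n\}$, and you even handle the degenerate case $|A|=1$ (where $v\mapsto\pi_v$ fails to be injective) that the paper silently passes over.
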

\begin{proof}
  The proof is almost identical to Proposition~1 in~\cite{olsak-loop}, but we provide it for the convenience of the reader. Let $L$ be the loop condition 
  \[
  f(x_{1,1},\ldots, x_{1,n}) \approx f(x_{2,1},\ldots,
  x_{2, n}) \approx \cdots \approx f(x_{m,1},\ldots,
  x_{m,n})\; .
\]

  Assume first that~(a) holds, and let $f \in\Clo(\aA)_n$ witness this fact. Let $R$ as in~(b) be given, and let $h\colon V\rightarrow A^{A^V}$ be a mapping which sends tuples in $R_L$ to tuples in $R$. Recall that the tuples in $R_L$ are precisely the tuples $x_i:=(x_{1,i}, \ldots, x_{m, i})$, where  $1\leq i\leq n$. We have that $f(h(x_1),\ldots,h(x_n))\in R$ since $f$ preserves $R$; on the other hand, the tuple $f(h(x_1),\ldots,h(x_n))$ is constant since $f$ satisfies $L$, proving~(b). 
  
  The implication from~(b) to~(c) is trivial.
  
  Finally, suppose that~(c) holds. Let $h\colon V\rightarrow A^{A^V}$ be the
mapping which sends every variable $v$ in $V$ to the projection $\pi_v\in A^{A^V}$ defined as follows: with $A^{A^V}$ being the set of all functions from $A^V$ to $A$, and $A^V$ being the set of all tuples of elements of $A$ indexed by the elements of $V$, the function $\pi_v$ maps any tuple in $A^V$ to the element
in $A$ labelled by $v$ in the tuple. Let $R'$ be the image of $R_L$ under $h$;
we can thus write $R'=\{r_1,\ldots, r_n\}$, where
$r_j=(h(x_{1,j}),\ldots,h(x_{m,j}))$ for all $1\leq j\leq n$. Set
$$
R:=\{f(r_1,\ldots,r_{n})\;|\; f\in \Clo(\aA)_n \}\; .
$$  
Then $R$ is preserved by $\Clo(\aA)$, and $R_L$ has an injective homomorphism into $R'\subseteq R$. Hence, by~(c) the relation $R$ has a loop, which is by definition of the form $f(r_1,\ldots,r_{n})$ for some $f\in \Clo(\aA)_n$. This means that the loop is of the form
\[
f((h(x_{1,1}),\ldots,h(x_{m,1})),\ldots, (h(x_{1,n}),\ldots,h(x_{m,n})))\; ,
\]
so 
\[
f(h(x_{1,1}),\ldots,h(x_{1,n}))=\cdots= f(h(x_{m,1}),\ldots,h(x_{m,n}))\; .
\]
By the definition of $h$, this yields
\[
f(\pi_{x_{1,1}},\ldots,\pi_{x_{1,n}})=\cdots= f(\pi_{x_{m,1}},\ldots,\pi_{x_{m,n}})\; .
\]
This means that $f$ satisfies $L$.
\end{proof}


\begin{cor}\label{cor:homo} 
Let $L, L'$ be loop conditions of the same width $m\geq 2$. Suppose that there exists a homomorphism from $R_L$ to $R_{L'}$. Then $L$ implies $L'$.
\end{cor}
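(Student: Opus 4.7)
My plan is to build an explicit witness for $L'$ from a witness for $L$ by a substitution of variables, rather than routing through Proposition~\ref{prop:loop-lem}. Write $L$ with variable set $V$, tuples $r_i := (x_{1,i},\ldots,x_{m,i})$ for $1\le i\le n$, and function symbol $f$, and write $L'$ analogously with $V'$, tuples $r'_j := (y_{1,j},\ldots,y_{m,j})$ for $1\le j\le n'$, and function symbol $f'$. Unpacking the definition, the homomorphism $\varphi\colon R_L\to R_{L'}$ amounts to a map $\varphi\colon V\to V'$ such that each image $(\varphi(x_{1,i}),\ldots,\varphi(x_{m,i}))$ is an entire tuple $r'_{\sigma(i)}$ of $R_{L'}$. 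The key point is that the same $\sigma(i)\in\{1,\ldots,n'\}$ is forced to work in all $m$ coordinates simultaneously, so that $\varphi(x_{k,i})=y_{k,\sigma(i)}$ for every $k$. I fix any such $\sigma\colon\{1,\ldots,n\}\to\{1,\ldots,n'\}$.

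Given a witness $t\in\Clo(\aA)_n$ for $L$, I then set $t'(z_1,\ldots,z_{n'}) := t(z_{\sigma(1)},\ldots,z_{\sigma(n)})$, which is again a term operation of $\aA$. To check that $t'$ witnesses $L'$, I rewrite the $k$-th side of $L'$, using $\varphi(x_{k,i})=y_{k,\sigma(i)}$, as
\[
t'(y_{k,1},\ldots,y_{k,n'}) = t(y_{k,\sigma(1)},\ldots,y_{k,\sigma(n)}) = t(\varphi(x_{k,1}),\ldots,\varphi(x_{k,n})).
\]
Evaluating under an arbitrary assignment $\beta\colon V'\to A$ gives $t(\beta\varphi(x_{k,1}),\ldots,\beta\varphi(x_{k,n}))$, which is independent of $k$ because $t$ satisfies $L$ under the $V$-assignment $\beta\circ\varphi$.

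Since the whole argument is straightforward syntactic bookkeeping, I do not foresee any significant obstacle; the only care needed is to observe that $\sigma$ need not be uniquely determined, but that any choice works. An alternative conceptual route would be to verify condition~(b) of Proposition~\ref{prop:loop-lem} for $L'$ by pre-composing any homomorphism $R_{L'}\to R$ with $\varphi$ and then applying (a)$\Rightarrow$(b) of that proposition to $L$; this requires noting that the latter implication goes through in any ambient space on which $\aA$ acts, not only the canonical $A^{A^V}$, so the two routes are essentially the same argument in different disguises.
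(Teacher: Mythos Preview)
Your argument is correct, and it is precisely the direct syntactic route that the paper itself flags in the remark immediately following the corollary (``identifying suitable variables in any function of an algebra satisfying $L$''). The paper's own proof instead goes through condition~(b) of Proposition~\ref{prop:loop-lem}: given a homomorphism $R_{L'}\to R$, pre-compose with $\varphi$ to get a homomorphism $R_L\to R$, and then appeal to (b) for $L$. As you correctly point out in your closing comment, that route tacitly needs the observation that (a)$\Rightarrow$(b) in Proposition~\ref{prop:loop-lem} holds over any power of $A$, since the variable sets $V$ and $V'$ may have different sizes; the paper glosses over this. Both approaches are equally elementary; yours is self-contained and makes the witness explicit, while the paper's keeps the semantic loop characterization in the foreground, which is the perspective driving the rest of the section.
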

\begin{proof}
Item~(b) in Proposition~\ref{prop:loop-lem} for $L$ clearly implies the same statement for $L'$ by composing homomorphisms.
\end{proof}

We remark that the truth of Corollary~\ref{cor:homo} can also be seen by identifying suitable variables in any function of an algebra satisfying $L$.

\begin{notation}
For $k, m\geq 2$, let $L^m_k$ be the loop condition 
associated with the relation $\mathbb{K}^m_k$.
\end{notation}
Note that $L^m_k$ has width $m$, arity $k^m-k$, and $k$ variables.
Observe also that Ol\v{s}\'{a}k's identities
\[
  o(x,y,y,y,x,x) \approx o(y,x,y,x,y,x) \approx
  o(y,y,x,x,x,y)
\]
from the introduction are a $L^3_2$ loop condition.

In our proof of a weakest loop condition for idempotent algebras we will first derive $L^3_\ell$ for some $\ell\geq 1$ in any non-trivial idempotent algebra, by purely syntactic composition of a Taylor term. We then show that satisfaction of $L^3_\ell$ implies satisfaction of $L^3_4$. 

For the first part, we will need the following notation.  Let $f, g$ be $n$-ary and
$m$-ary operations, respectively, on the same domain. Then $f \star g$ is the $mn$-ary function
given by 
\[
  f \star g ( x_{1,1}, \ldots, x_{1, m}, x_{2, 1}, \ldots,
  x_{n,m}) := f( g(x_{1,1}, \ldots, x_{1, m}), \ldots,
  g(x_{n, 1}, \ldots, x_{n, m})).
\]  

\begin{lem}\label{lem:big-clique}
Let $n\geq 1$, and let $\aA$ be an idempotent algebra which has an $n$-ary Taylor term. Then $\aA$ satisfies $L^3_{2n}$.
\end{lem}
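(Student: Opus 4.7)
The plan is to exhibit, via purely syntactic composition of the $n$-ary Taylor term $t$, an operation in $\Clo(\aA)$ that satisfies $L^3_{2n}$. By Corollary~\ref{cor:homo}, it suffices to construct a loop condition $L'$ of width $3$ that $\aA$ satisfies, together with a homomorphism $R_{L'} \to \mathbb{K}^3_{2n}$. I would identify the $2n$ variables of $\mathbb{K}^3_{2n}$ with $n$ disjoint pairs $\{a_i, b_i\}$, where the $i$-th pair plays the role of the $\{x,y\}$ appearing in the $i$-th Taylor identity $t(u_i) \approx t(w_i)$ (so $u_i[i]=x$, $w_i[i]=y$, and the other entries of $u_i, w_i$ lie in $\{x,y\}$).

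The construction uses the star product. I would consider $s := t \star t$, an $n^2$-ary term whose positions are indexed by pairs $(k,\ell) \in \{1,\dots,n\}^2$. Using idempotence of $\aA$ to collapse inner $t$'s to single variables by setting all $n$ inner positions at outer index $k$ to the same formal argument, the $i$-th outer Taylor identity yields a width-$2$ identity for $s$ in which the outer column $k=i$ is flipped between an ``$x$-role'' and a ``$y$-role''; similarly, each inner Taylor identity with index $j$ yields a width-$2$ identity flipping the $j$-th inner position inside a single outer block. Chaining an outer flip (index $i$) and an inner flip (index $j$) through a common intermediate row produces a width-$3$ identity for $s$, whose associated ternary relation $R_{L'}$ lives on some finite variable set $W$. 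The candidate coloring $\phi \colon W \to \{a_1,\dots,a_n,b_1,\dots,b_n\}$ then sends each variable to the element of the pair $\{a_i, b_i\}$ (or $\{a_j, b_j\}$) corresponding to its $x$- or $y$-role in the outer (or inner) Taylor identity it comes from; variables that play no role in either flip are sent by idempotence to any fixed element.

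The main technical step is to verify that $\phi$ sends every tuple of $R_{L'}$ to a non-constant triple in $\mathbb{K}^3_{2n}$. This rests on the diagonal Taylor conditions $u_i[i]=x$, $w_i[i]=y$ at both levels, which force, on the outer column $i$ and the inner index $j$ that are flipped in the chain, two distinct colors among $\{a_i,b_i\}$ or $\{a_j,b_j\}$ across the three rows. The main obstacle I anticipate is exactly this combinatorial bookkeeping: I must choose the chaining so that the single width-$3$ identity's column structure globally avoids monochromatic triples, given that only $n$ Taylor identities are available at each level, and that each chain flip equates only one adjacent pair of rows at a time. The fact that $\mathbb{K}^3_{2n}$ allows $2n$ colors, twice the number of identities, is exactly what makes the coloring possible: each Taylor index $i$ contributes two fresh colors $a_i,b_i$, which suffice to separate the three rows simultaneously at both the outer and inner flipped positions.
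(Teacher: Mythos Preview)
Your overall plan---build a width-$3$ loop condition for a composite of $t$, then colour its associated relation into $\mathbb{K}^3_{2n}$ and invoke Corollary~\ref{cor:homo}---matches the paper's strategy. The gap is in the composite you chose: $s = t \star t$ does not carry enough structure to produce a non-trivial width-$3$ identity by the chaining you describe.

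Concretely, your outer flip at index $i$ requires the intermediate row to have each inner block constant in $\ell$ (so that idempotence collapses $s$ to $t$ and the $i$-th Taylor identity applies at the outer level). Your inner flip at index $j$ requires that same intermediate row to have its inner block(s) follow the pattern $u_j$ under a substitution $x\mapsto e,\ y\mapsto f$ with $e\neq f$ (otherwise the $j$-th inner identity degenerates). These two demands on the shared row are incompatible unless $u_j$ is the constant word, and even then the resulting triples are constant at every column $(k,\ell)$ with $u_i[k]=w_i[k]$ and $w_j[\ell]=x$---which happens for generic $k\neq i$. So the obstacle is structural, not bookkeeping.

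The paper avoids this by taking one more level: $h := t\star t\star t$, with variables $z_{i,j,k}$, and the three substitutions
\[
z^1_{i,j,k}=x_{i,j},\qquad z^2_{i,j,k}=y_{i,j},\qquad z^3_{i,j,k}=x_{j,k}.
\]
Rows $1$ and $2$ are equated by applying all $n$ Taylor identities at the \emph{middle} level (the innermost level collapses by idempotence). Row $3$ is equated to row $1$ not by any Taylor identity but by a \emph{shift}: since $z^3_{i,j,k}$ is independent of $i$, the outermost $t$ collapses by idempotence, and what remains is exactly $t(t(x_{1,\cdot}),\ldots,t(x_{n,\cdot}))=h(z^1)$. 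The non-constancy check is then one line: at column $(i,j,k)$ the triple is $(x_{i,j},y_{i,j},x_{j,k})$, and either $i=j$ (so $x_{i,i}\neq y_{i,i}$) or $i\neq j$ (so $x_{i,j}$ and $x_{j,k}$ lie in disjoint variable pairs). The third nesting level is precisely what buys the shift trick that your two-level $s$ cannot accommodate.
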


\begin{proof}
Since $\mathbf{A}$ satisfies some $n$-ary
  idempotent Taylor identities, it also satisfies a set of identities
  \begin{align}
    t(x_{1,1}, \ldots, x_{1, n}) &\approx t(y_{1,1}, \ldots, y_{1,n}) \notag \\
    & \ldots \label{eq-1}\\
    t(x_{n,1}, \ldots, x_{n, n}) &\approx t(y_{n,1}, \ldots, y_{n,n}) \notag
  \end{align}
  such that 
  \begin{itemize}
  \item  $x_{i, i} \neq y_{i, i}$ for all $1\leq i\leq n$;
  \item each identity contains precisely two variables;
  \item no variable occurs in two identities.
  \end{itemize}
This is because for each of the $n$ Taylor identities satisfied in $\mathbf{A}$, we can substitute different variables for $x$ and $y$ without changing the fact that they are satisfied by its Taylor term.  Let $V$ be the set of variables occurring in these identities; then $|V|=2n$.

  Let $h := t \star t \star t$; so we can write $h=t(s_1,\ldots s_n)$, where
  $$
  s_i=t( t(z_{i,1,1}, \ldots, z_{i,1, n}), \ldots,
  t(z_{i,n, 1}, \ldots, z_{i,n, n}))\; 
  $$
  for all $1\leq i\leq n$. The variables of $h$ then are given by $(z_{i,j,k}: 1\leq i,j,k\leq n)$; we will now substitute three different sets of variables from $V$ for them.  Define, for all $1\leq i,j,k\leq n$, 
$z^1_{i, j, k} := x_{i, j}$, $z^2_{i,
  j, k} := y_{i,j}$, and $z^3_{i, j, k} := x_{j, k}$. 
  
  Since $t$ is idempotent, substituting $(z_{i,j,k}^1: 1\leq i,j,k\leq n)$ for the variables $(z_{i,j,k}: 1\leq i,j,k\leq n)$ of $h$, each $s_i$ becomes $t(x_{i,1},\ldots,x_{i,n})$, and hence
  \begin{align*}
    h(({z^1_{i,j,k}:1\leq i,j,k\leq n}))\; &\;
    \approx\; t (t(x_{1,1},\ldots,x_{1,n}), \ldots, t(x_{n,1},\ldots,x_{n,n})).
  \end{align*}
  Similarly, we have 
    \begin{align*}
    h(({z^2_{i,j,k}:1\leq i,j,k\leq n}))\; &\;
    \approx\; t (t(y_{1,1},\ldots,y_{1,n}), \ldots, t(y_{n,1},\ldots,y_{n,n})).
  \end{align*}   
  It then
  follows from~\eqref{eq-1} that
  \begin{align*}
    h(({z^1_{i,j,k}:1\leq i,j,k\leq n}))\; &\;
    \approx\; t (t(x_{1,1},\ldots,x_{1,n}), \ldots, t(x_{n,1},\ldots,x_{n,n}))\\
    &\;
    \approx\; t (t(y_{1,1},\ldots,y_{1,n}), \ldots, t(y_{n,1},\ldots,y_{n,n}))\\
    &\;
    \approx\; h(({z^2_{i,j,k}:1\leq i,j,k\leq n}))\; .
  \end{align*}
  Finally, if we substitute $(z_{i,j,k}^3: 1\leq i,j,k\leq n)$ for the variables of $h$, then each $s_i$ becomes 
  $$
  t( t(x_{1,1}, \ldots, x_{1, n}), \ldots,
  t(x_{n, 1}, \ldots, x_{n, n}))\;,
  $$
  independently of $i$.  Hence, by the idempotency of $t$, we obtain again
    \begin{align*}
    h(({z^3_{i,j,k}:1\leq i,j,k\leq n}))\; &\;
    \approx\; t (t(x_{1,1},\ldots,x_{1,n}), \ldots, t(x_{n,1},\ldots,x_{n,n}))\; .
  \end{align*}   
 Summarizing, we have that the identities
  \begin{equation}\label{eq-2}
        h(({z^1_{i,j,k}:1\leq i,j,k\leq n})) \approx     h(({z^2_{i,j,k}:1\leq i,j,k\leq n})) \approx     h(({z^3_{i,j,k}:1\leq i,j,k\leq n}))
  \end{equation}
  hold in $\mathbf{A}$. 

  Next observe that for all $1\leq i,j,k\leq  n$ either $i = j$,
  and hence $z^1_{i,j,k} = x_{i,i} \neq y_{i,i} = z^2_{i,j,k}$; or $i \neq j$ in
  which case $z^1_{i,j,k} = x_{i, j} \neq x_{j, k} = z^3_{i, j, k}$. Thus, the
  relation associated with the loop condition \eqref{eq-2} is a subset of $V^3
  \setminus \{(v,v,v) \;|\; v \in  V\}$, and hence homomorphically maps into the latter (via the identity function). Hence, $\mathbf{A}$ satisfies $L^3_{2n}$ by Corollary~\ref{cor:homo}.
\end{proof}

It follows from Corollary~\ref{cor:homo} that $L^m_k$ implies $L^m_{k+1}$, for all $m,k\geq 2$, and hence for a fixed width $m\geq 2$  these conditions could become, in theory, strictly weaker with increasing arity. We now show that this is almost never the case.

\begin{lem}\label{lem:reduce-clique}
  Let $\aA$ be an algebra, let $m \geq 2$, and let $k \geq \max(4,m + 1)$. If $\aA$
  satisfies $L^m_{k + 1}$, then it also
  satisfies $L^m_k$.
\end{lem}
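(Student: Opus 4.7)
The plan is to prove this by a direct syntactic construction: starting from the term $f \in \Clo(\aA)$ of arity $n' = (k+1)^m - (k+1)$ witnessing $L^m_{k+1}$, I would build a term $g \in \Clo(\aA)$ of arity $n = k^m - k$ witnessing $L^m_k$, following the spirit of the ``identify variables'' remark after Corollary~\ref{cor:homo}. Pure identification cannot work directly: since $\mathbb{K}^m_{k+1}$ has no graph homomorphism to $\mathbb{K}^m_k$ (such a map would be a proper $k$-colouring of a $(k+1)$-clique), Corollary~\ref{cor:homo} is not applicable, and the identification must be supplemented by some further composition.

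Concretely, fix a surjection $\phi \colon \{1, \ldots, k+1\} \to \{1, \ldots, k\}$ identifying $k+1$ with $k$, and substitute $\phi$ into the $L^m_{k+1}$-identity of $f$. Under this substitution, the $n'$ rows of arguments of $f$ (indexed by the non-constant $m$-tuples over $\{1, \ldots, k+1\}$) reorganise as follows: exactly $2^m - 2$ rows---those whose support lies in the identified pair $\{k, k+1\}$---collapse to the constant tuple $(k, \ldots, k) \in V^m$, while the remaining $n' - (2^m - 2)$ rows map, with multiplicity, surjectively onto $R_{L^m_k}$. The surjective cover of $R_{L^m_k}$ already yields an identity of the right shape for $L^m_k$, but the collapsed rows contribute $2^m - 2$ extra arguments to $f$ which, under any assignment $a \colon V \to A$, uniformly evaluate to the single value $a(k)$ on both sides.

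The main obstacle is to absorb these surplus arguments, that is, to supply the value $a(k)$ from within the legal arguments $\{a(u_s) : u \in R_{L^m_k},\ s \in \{1, \ldots, m\}\}$ using a term of $\Clo(\aA)$. The bound $k \geq m + 1$ provides the crucial slack here: every non-constant $u \in V^m$ uses at most $m \leq k-1$ of the $k$ available values, so in particular there exist tuples $u \in R_{L^m_k}$ whose coordinates are all equal to $k$ except in one position, and the bound $k \geq 4$ supplies enough distinct auxiliary values to handle the small boundary cases. Combining several such projections via a further composition with $f$ (for example by a star-composition analogous to the one employed in Lemma~\ref{lem:big-clique}) should produce a term whose evaluation is constant in $s$ and equal to $a(k)$ on both sides of the identity; substituting this absorbing term into the $2^m - 2$ surplus slots of $f$ yields the desired $g$. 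The remaining work is the combinatorial verification that this substitution preserves the identity structure and yields precisely an identity of the $L^m_k$-shape.
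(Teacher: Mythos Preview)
Your approach differs fundamentally from the paper's. The paper argues relationally via Proposition~\ref{prop:loop-lem}: given $R \subseteq B^m$ preserved by $\aB = \aA^{A^k}$ and containing an injective copy of $\mathbb{K}^m_k$, it defines a pp-power $Q \subseteq (B^2)^m$ and exhibits $k+1$ pairs in $B^2$ on which $Q$ contains a copy of $\mathbb{K}^m_{k+1}$. The hypothesis then forces a loop in $Q$, and the existential witnesses in the pp-definition of $Q$ yield $k+1$ elements of $B$ inducing $\mathbb{K}^m_{k+1}$ in $R$ itself, whence $R$ has a loop. No term is ever constructed.

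Your syntactic plan has a genuine gap at precisely the step you yourself flag as the main obstacle. After the $\phi$-identification you need a term $t \in \Clo(\aA)_n$ whose evaluation on the $s$-th row of the $L^m_k$-array equals $a(k)$, for every row $s$ and every assignment $a$. No projection achieves this (that would require a constant column in $R_{L^m_k}$), and your appeal to a star-composition in the style of Lemma~\ref{lem:big-clique} is unsupported: that argument collapses nested applications using \emph{idempotence} of the Taylor term, a property $f$ does not have. Nor would a term merely constant in $s$ suffice: the non-collapsed columns of $f$ that contain the symbol $k+1$ (for instance the column $(1,k+1,\ldots,k+1)$) already receive the value $a(k)$ under your substitution, so any assignment $a'$ extending $a$ to $\{1,\ldots,k+1\}$ that would realise the substituted array as an instance of $f$'s identity must have $a'(k+1)=a(k)$; the collapsed columns, being non-constant over $\{k,k+1\}$, then force the slot value to equal both $a'(k)$ and $a'(k+1)$, hence $a(k)$. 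It is not at all clear how to extract such a $t$ from $L^m_{k+1}$ alone, and the paper's pp-power construction sidesteps the issue entirely by supplying the extra element as an existentially quantified witness rather than as a term value.
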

\begin{proof} 
  We show this by verifying the criterion given in Proposition~\ref{prop:loop-lem}~(c). Set $\aB:=\aA^{A^k}$. Let  $R \subseteq B^m$ be such that $R$
  is preserved by $\aB$ and such that $\mathbb{K}_k^m$, the relation associated with $L^m_k$, has an injective homomorphism to $R$.  Let $c_1,\ldots, c_k \in B$ be the elements in the image of that homomorphism. By Proposition~\ref{prop:loop-lem} it is sufficient to show that $R$ has a loop.

 For every $\ell\geq 1$, let $\phi_\ell^R(z_1,\ldots,z_\ell)$ be the primitive positive formula
 $$
 \bigwedge_{1\leq i_1,\ldots, i_m\leq \ell,\; \text{ not all equal}} R(z_{i_1},\ldots,z_{i_m})\; .
 $$
If $R$ contains no loops, then $\phi_\ell^R(z_1,\ldots,z_\ell)$  asserts that $z_1,\ldots,z_\ell$ are distinct and induce $\mathbb{K}_\ell^m$ in $R$. Define $Q \subseteq (B^2)^m$ by $((a_1, b_1), \ldots , (a_m, b_m)) \in Q$ if
  and only if there exist elements $x_1, \ldots, x_{k - 1} \in B$ such
  that
  \begin{enumerate}[label=(\alph*)]

    \item  $\phi_{k+2-m}^R(x_m, \ldots, x_{k - 1}, a_m, b_m)$ holds\label{con-2};

    \item $\phi_3^R(x_i, a_i, b_i)$ holds for all $1\leq i\leq m - 1$\label{con-3};

    \item for all distinct $1\leq i, j\leq  k-1$ and all  $y_3, \ldots, y_m \in \{x_1,\ldots, x_{k-1}, a_1, b_1,
    \ldots, a_m, b_m\}$ the tuple $(x_i, x_j, y_3, \ldots, y_m)$, as well as all of its permutations, is in $R$\label{con-4}.
  \end{enumerate}
  Then $(B^2, Q)$ is a pp-power of $(B, R)$, and therefore is preserved by
  $\mathbf{B}^2$.

  Let $S = \{(c_1, c_2), \ldots, (c_{k - 1}, c_k),
  (c_k, c_1), (c_1, c_3)\}$. Then $|S|=k+1$, and since $k \geq 4$, for all
  distinct $(a, b)$ and  $(a',b')$  in $S$ we have that $\{a, b\} \neq \{a',
  b'\}$. We will next show that $\phi^Q_{k+1}((c_1,c_2),\ldots,(c_{k - 1}, c_k),
  (c_k, c_1), (c_1, c_3))$ holds. To this end, let
  $(a_1, b_1), \ldots, (a_m, b_m) \in S$ be not all the same; then $(a_i, b_i) \neq (a_m,b_m)$ for some $1\leq i\leq m-1$, and we assume without loss of generality that $i=1$. We have to show that $Q((a_1, b_1), \ldots, (a_m, b_m))$ holds.
  
   Choose $x_1 \in \{a_m,
  b_m\} \setminus \{a_1, b_1\}$. Next, pick inductively for all $2\leq j\leq m-1$ an element $x_j\in \{c_1, \ldots, c_k\} \setminus \{a_j, b_j, x_1,
  \ldots, x_{j - 1}\}$; this is possible since $k \geq m + 1$. Then $|\{a_m,
  b_m, x_1, \ldots, x_{m - 1}\}| \leq m$ as $x_1 \in \{a_m, b_m\}$, and so for
  each $m\leq j\leq k-1$, we can pick a distinct element $x_j$ from
  $\{c_1, \ldots, c_k\} \setminus \{a_m, b_m, x_1, \ldots, x_{m -
  1}\}$. It follows from the above assignment that $x_1, \ldots, x_{k-1}, a_1,
  b_1, \ldots, a_m, b_m \in \{ c_1, \ldots, c_k\}$, and so in order
  for 
  \ref{con-2} and \ref{con-3} to hold we only need to observe that
  the set of variables listed in each of the conditions contains no repetitions. But this
  follows from the fact that $a_i \neq b_i$ for all $1\leq i\leq m$ and the assignment
  specified above. Finally, \ref{con-4} holds as $x_i \neq x_j$ for all distinct $1\leq i, j\leq k-1$. Hence, $Q((a_1, b_1), \ldots, (a_m, b_m))$ holds indeed, and so we have shown that $\phi^Q_{k+1}((c_1,c_2),\ldots,(c_{k - 1}, c_k),
  (c_k, c_1), (c_1, c_3))$ holds.

  The latter implies that $\mathbb{K}_{k+1}^m$, i.e., the relation associated with the loop condition $L^m_{k +1}$,   homomorphically maps into $Q$. Since $Q$ is preserved by $\mathbf{B}^2$, and $\mathbf{B}^2$ satisfies $L^m_{k +1}$ since it is a power of $\aA$, 
  it follows from Proposition~\ref{prop:loop-lem} that $Q$ contains a loop $((a,b), \ldots, (a,b))$, where $(a,b)\in B^2$. By the definition of $Q$, this means that there exist $x_1, \ldots,
  x_{k-1} \in B$ such that
  \begin{enumerate}[label=(\alph*)]

    \item  $\phi_{k+2-m}^R(x_m, \ldots, x_{k - 1}, a, b)$ holds\label{con-2};

    \item $\phi_3^R(x_i, a, b)$ holds for all $1\leq i\leq m - 1$\label{con-3};

    \item for all distinct $1\leq i, j\leq  k-1$  and all $y_3, \ldots, y_m \in \{x_1,\ldots, x_{k-1}, a, b\}$  
    the tuple $(x_i, x_j, y_3, \ldots, y_m)$, as well as all of its permutations, is in $R$\label{con-4}.
  \end{enumerate}
It is routine to verify that under these conditions $\phi_{k+1}^R(x_1, \ldots,
  x_{k - 1}, a, b)$ holds. This in turn implies that $\mathbb{K}_{k + 1}^m$, the relation associated with the loop condition $L^m_{k +1}$, homomorphically maps into $R$. Since $\aB$ satisfies that loop condition, $R$ has a loop by
  Proposition~\ref{prop:loop-lem}.
\end{proof}

It follows from Lemma~\ref{lem:reduce-clique} that there is a weakest non-trivial loop
condition of every fixed width. 

\begin{cor}\label{cor:weakestloopcond}
Let $\aA$ be an algebra which satisfies some non-trivial loop condition of width $m\geq 2$. Then $\aA$ satisfies $L^m_{\max(4,m +1)}$.
\end{cor}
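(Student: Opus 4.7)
The plan is to reduce the given non-trivial loop condition to one of the standard form $L^m_k$ and then apply Lemma~\ref{lem:reduce-clique} iteratively.

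First I would use the observation made right after the definition of loop conditions: since $L$ is non-trivial, the associated relation $R_L \subseteq V^m$ contains no constant tuple. Hence, after identifying $V$ with $\{1,\ldots,|V|\}$, the identity map is a (injective) homomorphism from $R_L$ into $\mathbb{K}_{|V|}^m$. By Corollary~\ref{cor:homo}, $L$ implies $L^m_{|V|}$, so $\aA$ satisfies $L^m_{|V|}$.

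Now set $k_0 := \max(4, m+1)$, and split into two cases. If $|V| \leq k_0$, then the identity inclusion $\mathbb{K}_{|V|}^m \hookrightarrow \mathbb{K}_{k_0}^m$ is a homomorphism between the relations associated with $L^m_{|V|}$ and $L^m_{k_0}$, so another application of Corollary~\ref{cor:homo} gives that $\aA$ satisfies $L^m_{k_0}$. If instead $|V| > k_0$, then starting from $\aA \models L^m_{|V|}$ we apply Lemma~\ref{lem:reduce-clique} repeatedly: at each step we have $\aA \models L^m_{k+1}$ for some $k \geq k_0$, so the lemma yields $\aA \models L^m_k$; we iterate until $k = k_0$.

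There is no real obstacle here, since all the work has already been done in Lemma~\ref{lem:reduce-clique} (the genuinely technical step) and Corollary~\ref{cor:homo}. The only thing to be careful about is making sure the hypothesis $k \geq \max(4, m+1)$ of Lemma~\ref{lem:reduce-clique} remains satisfied throughout the iterative reduction, which is exactly why we stop at $k_0$ rather than attempting to decrease further.
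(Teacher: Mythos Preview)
Your proof is correct and follows essentially the same route as the paper: establish a homomorphism from $R_L$ into some $\mathbb{K}_\ell^m$ (using that $L$ is non-trivial, so $R_L$ has no loops), deduce $L^m_\ell$ via Corollary~\ref{cor:homo}, and then apply Lemma~\ref{lem:reduce-clique} iteratively to reach $\max(4,m+1)$. The only cosmetic difference is that the paper observes $R_L$ maps into $\mathbb{K}_\ell^m$ for \emph{every} $\ell\geq |V|$, so it can pick $\ell\geq\max(4,m+1)$ from the outset and avoid your case split on whether $|V|\leq k_0$.
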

\begin{proof}
Let $L$ be a non-trivial loop condition of width $m$ which is satisfied by $\aA$, and denote its variable set by $V$. Then $R_L$ homomorphically maps into $\mathbb{K}_{\ell}^m$, for any $\ell\geq |V|$. Hence, $\aA$ satisfies $L^m_\ell$ by Proposition~\ref{prop:loop-lem}, and whence also $L^m_{\max(4,m +1)}$ by Lemma~\ref{lem:reduce-clique}.
\end{proof}

Combining this with Lemma~\ref{lem:big-clique} gives us the desired result.

\begin{thm}
  Let $\aA$ be an idempotent algebra which satisfies a non-trivial set of identities. Then $\aA$ also satisfies $L^3_4$.
\end{thm}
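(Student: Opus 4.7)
The plan is to combine the three main ingredients already established in this section: Taylor's theorem, Lemma~\ref{lem:big-clique}, and Corollary~\ref{cor:weakestloopcond}. Since $\aA$ is idempotent and satisfies a non-trivial set of identities, Taylor's theorem (as recalled in the preliminaries) yields an $n$-ary Taylor term in $\Clo(\aA)$ for some $n \geq 1$.

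Next, I would invoke Lemma~\ref{lem:big-clique} on this Taylor term to conclude that $\aA$ satisfies the loop condition $L^3_{2n}$. This is a loop condition of width $3$, and it is non-trivial: the associated relation $\mathbb{K}^3_{2n}$ excludes all constant tuples, so by the observation following the definition of a loop condition, $L^3_{2n}$ cannot be satisfied by any projection.

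Finally, I apply Corollary~\ref{cor:weakestloopcond} with $m=3$. Since $\aA$ satisfies a non-trivial loop condition of width $3$, namely $L^3_{2n}$, the corollary gives that $\aA$ satisfies $L^3_{\max(4, 3+1)} = L^3_4$, which is precisely the desired conclusion.

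There is essentially no obstacle here, since all the work has been done in the preceding lemmas; this theorem is the assembly step. The only thing worth double-checking is that the hypotheses of Corollary~\ref{cor:weakestloopcond} are truly met, i.e., that $L^3_{2n}$ is non-trivial for every $n\geq 1$ (it is, as $2n \geq 2$ and $\mathbb{K}^3_{2n}$ has no constant tuples), and that $\max(4, m+1) = 4$ when $m=3$.
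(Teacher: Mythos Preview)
Your proposal is correct and matches the paper's proof essentially line for line: Taylor's theorem gives an $n$-ary Taylor term, Lemma~\ref{lem:big-clique} yields $L^3_{2n}$, and Corollary~\ref{cor:weakestloopcond} with $m=3$ reduces this to $L^3_4$. The additional verifications you spell out (non-triviality of $L^3_{2n}$ and $\max(4,4)=4$) are fine and do no harm.
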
 
\begin{proof}
  By Taylor's theorem, $\aA$ has a Taylor term of some arity $n\geq 1$. By
  Lemma~\ref{lem:big-clique}, $\aA$ then satisfies $L^3_{2n}$. By Corollary~\ref{cor:weakestloopcond},  this in turn implies that $\aA$ satisfies $L^3_4$.
\end{proof}

We have seen that for a fixed width $m\geq 2$, there is a weakest loop condition, namely $L^m_{m +1}$ (Corollary~\ref{cor:weakestloopcond}). By repeating the last equation of the loop condition, and applying Proposition~\ref{prop:loop-lem}, it is also easy to see that $L^m_k$ implies $L^{m+1}_k$ for all $k\geq 2$. We now show that the converse it not true, meaning that loop conditions generally become strictly weaker with increased width.

\begin{lem}\label{L:QNUreduire2}
Let $m\ge 2$. Let $\aF$ be the free algebra, over a countable set $V$ of generators,  with a single $(m+1)$-ary operation symbol $t$ required to  satisfy the weak near unanimity identities\footnote{The term \emph{weak near unanimity operation} is commonly used for operations which satisfy the identities given here and which are moreover idempotent; we do not require, nor desire, idempotency here.}
\[
t(x,y,\dots,y)\approx t(y,x,y,\dots,y)\approx\dots \approx t(y,\dots,y,x)\; .
\]
If $t(u_1^1,\dots,u_{m+1}^1)=\dots=t(u_1^\ell,\dots,u_{m+1}^\ell)$, where $1\leq \ell<m+1$, and $u_i^k\in F$ for all $1\le i\le m+1$ and all $1\le k\le \ell$, then there exists  $1\leq i\leq m+1$ such that $u_i^1=\dots=u_i^\ell$.
\end{lem}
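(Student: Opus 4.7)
My plan is to reduce the lemma to a structural claim about the free algebra, which I then combine with a pigeonhole argument. The claim I aim to establish is: whenever $(p_1,\dots,p_{m+1})$ and $(q_1,\dots,q_{m+1})$ are elements of $F^{m+1}$ with $t(p_1,\dots,p_{m+1})=t(q_1,\dots,q_{m+1})$ in $\aF$, then either $p_i=q_i$ for every $i$, or there exist $a,b\in F$ with $a\ne b$ such that both tuples have $a$ in exactly one coordinate and $b$ in all remaining coordinates (with $a$ possibly in different positions in the two tuples).

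Granting this claim, the lemma follows quickly. If the $m$ tuples $(u_1^k,\dots,u_{m+1}^k)$ all coincide as elements of $F^{m+1}$, any coordinate $i$ works. Otherwise, applying the structural claim to some pair of distinct tuples yields that both are near-unanimous with some pair $(a,b)$ with $a\ne b$; such a near-unanimous tuple uniquely determines its pair (as $b$ is the entry with multiplicity $m$ and $a$ is the singleton), so applying the claim pairwise forces all $m$ tuples to be near-unanimous with this same $(a,b)$. Writing $p_k$ for the position of $a$ in the $k$-th tuple, the pigeonhole principle on $m+1$ coordinates and $m$ values $p_k$ yields a coordinate $i\notin\{p_1,\dots,p_m\}$, at which every tuple carries the entry $b$, giving $u_i^1=\dots=u_i^m=b$.

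To prove the structural claim, I would induct on the length $N$ of a shortest derivation of $t(s_1,\dots,s_{m+1})\sim t(s_1',\dots,s_{m+1}')$ in the absolutely free term algebra, where $\sim$ is the congruence generated by the weak near unanimity axioms and $p_i=[s_i]$, $q_i=[s_i']$. The base case $N=0$ is trivial. For the inductive step I examine the first elementary rewrite in the derivation. If it happens strictly inside some subterm $s_i$, the $\sim$-classes of the root-children are preserved and the conclusion descends from the inductive hypothesis applied to the shorter remaining derivation. If it happens at the root, then $(s_1,\dots,s_{m+1})$ must be literally near-unanimous at the term level, with some odd element $a'$ and background $b'$; hence $(p_1,\dots,p_{m+1})$ is near-unanimous in $F$ with pair $([a'],[b'])$, and the inductive hypothesis applied to the remainder of the derivation forces $(q_1,\dots,q_{m+1})$ to also be near-unanimous with the same pair, with the degenerate case $[a']=[b']$ (where the root-tuple collapses to a constant in $F$) falling into the first alternative of the claim instead.

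The main obstacle I expect is this inductive step, and specifically the crucial fact that a WNU rewrite at the root can only be applied to a \emph{syntactically} near-unanimous tuple, not just one which becomes near-unanimous after quotienting by $\sim$. This syntactic rigidity is what prevents further identifications of $t$-values in $\aF$, and the bookkeeping needed to transfer it to the desired semantic statement while correctly handling the degenerate case of collapse to constancy in $F$ is the technical heart of the argument.
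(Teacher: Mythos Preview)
Your proposal is correct and follows essentially the same strategy as the paper: establish a structural fact about when two $t$-applications coincide in the free WNU algebra (the paper phrases it as the existence of indices $i_k,j_k$ such that all off-index entries agree, you phrase it as the dichotomy ``equal tuples or both near-unanimous with the same pair''), and then finish with the identical pigeonhole argument on $m$ exceptional positions among $m+1$ coordinates. The paper simply asserts its structural fact ``by the definition of equality in the free algebra'', whereas you supply the induction on derivation length that actually justifies it; your dichotomy formulation also handles the degenerate equal-tuples case more cleanly than the paper's literal statement does.
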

\begin{proof}
By deleting repeated items of the equations, we may assume that for all  $1< k\leq \ell$ we have $(u_1^1,\dots, u_{m+1}^1)\neq (u_1^{k},\dots,u_{m+1}^k)$. Fix $2\le k\le \ell$, and consider the equality $t(u_1^1,\dots,u_{m+1}^1)=t(u_1^k,\dots,u_{m+1}^k)$. By the definition of equality in the free algebra $\aF$, and since $(u_1^1,\dots, u_{m+1}^1)\neq (u_1^{k},\dots,u_{m+1}^k)$, this equality only holds if the $u_i^j$ are substituted into one of the identities satisfied by $t$. Hence, there exist $1\le i_k,j_k\le m+1$ such that $u_{i_k}^1=u_{j_k}^k$ (substituted for $y$ in the identity) and such that $u_i^1=u_j^k$ for all $1\leq i,j\leq m+1$ with $i\neq i_k$ and $j\neq j_k$ (substituted for $x$ in the identity). 

In particular, there exists $u\in F$ such that for every $1\leq k\leq \ell$ there exists at most one index  $1\leq j \leq m+1$ such that $u^k_j\neq u$. Picking any $1\leq i\leq m+1$ which never appears as this index proves the lemma.
\end{proof}

\begin{lem}\label{lem:wnu}
Let $m\ge 2$. Then the $(m+1)$-ary weak near unanimity loop condition does not imply any non-trivial loop condition of width $m$; that is, the free algebra $\aF$ as in Lemma~\ref{L:QNUreduire2} satisfies no non-trivial loop condition of width $m$.
\end{lem}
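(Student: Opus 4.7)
The plan is to prove the contrapositive: if $\aF$ satisfies a loop condition $L$ of width $m$, with variable set $W$ and witness term function $\tau$ of arity $n$, then $L$ is trivial. Since $W$ is finite and the generator set $V$ of $\aF$ is countably infinite, fix an injection $h\colon W\to V$ and set $u_{k,j}:=h(x_{k,j})\in F$. Instantiating the identities of $L$ via $h$ gives
\[
\tau(u_{1,1},\dots,u_{1,n})=\dots=\tau(u_{m,1},\dots,u_{m,n})
\]
in $\aF$. Because $h$ is injective, exhibiting a coordinate $j$ with $u_{1,j}=\dots=u_{m,j}$ forces $x_{1,j}=\dots=x_{m,j}$, showing that the column indexed by $j$ of $R_L$ is a loop and hence $L$ is trivial.

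The heart of the argument is the following strengthening of Lemma~\ref{L:QNUreduire2}, proved by induction on the depth of $\tau$ viewed as a formal term in the signature $\{t\}$: for every $n$-ary term $\tau$ and every family $(u_{k,j})_{1\le k\le m,\,1\le j\le n}\in F^{mn}$, if the elements $\tau(u_{k,1},\dots,u_{k,n})$ of $\aF$ coincide as $k$ ranges over $\{1,\dots,m\}$, then there exists $1\le j\le n$ with $u_{1,j}=\dots=u_{m,j}$. When $\tau$ is a projection $y_j$, this reduces directly to the hypothesis. Otherwise $\tau=t(\sigma_1,\dots,\sigma_{m+1})$, and setting $v_{k,\ell}:=\sigma_\ell(u_{k,1},\dots,u_{k,n})$ turns the hypothesis into
\[
t(v_{1,1},\dots,v_{1,m+1})=\dots=t(v_{m,1},\dots,v_{m,m+1})
\]
in $\aF$. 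Lemma~\ref{L:QNUreduire2} then supplies an index $\ell$ with $v_{1,\ell}=\dots=v_{m,\ell}$, which is to say that $\sigma_\ell(u_{k,1},\dots,u_{k,n})$ does not depend on $k$. The inductive hypothesis applied to the strictly shallower subterm $\sigma_\ell$ yields the required coordinate $j$.

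The main obstacle is really just setting up the induction cleanly; once that is done, the algebraic work is entirely absorbed by Lemma~\ref{L:QNUreduire2}, which is invoked once at each level of the term tree. A minor technicality worth addressing is that $\tau$ is nominally an element of $\aF$, i.e., an equivalence class of formal terms modulo the weak near unanimity identities, but the inductive statement only concerns the values of $\tau$ and its subterms in $\aF$, so any choice of formal representative makes the induction go through.
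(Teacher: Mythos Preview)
Your proof is correct and follows essentially the same line as the paper's. The paper picks a formal representative of minimal depth for the witnessing term function and derives a contradiction by producing a shallower witness via Lemma~\ref{L:QNUreduire2}; you instead set up an explicit induction on the depth of a chosen representative, invoking Lemma~\ref{L:QNUreduire2} once at each level to pass from $t(\sigma_1,\dots,\sigma_{m+1})$ to some $\sigma_\ell$. These are the same argument in dual dress, and your version is slightly more explicit about the injection $h\colon W\to V$ (which the paper uses implicitly when it treats the $x_{k,j}$ as generators of $\aF$).
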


\begin{proof}
Consider any non-trivial loop condition 
\begin{equation}\label{E:reduireconditionboucle}
f(x_{1,1},\dots,x_{1,n}) \approx f(x_{2,1},\dots,x_{2,n}) \approx \dots\approx f(x_{m,1},\dots,x_{m,n})
\end{equation}
of width $m$, and suppose it is satisfied in $\aF$. We pick a term over the symbol $t$ of smallest possible depth such that the corresponding term function in $\Clo(\aF)$ satisfies the loop condition; abusing notation, we denote both the term as well as the term function in $\Clo(\aF)$ which it induces by $f$. As the loop condition is not trivial, the term $f$ is not a variable, hence we can write $f=t(u_1,\dots,u_{m+1})$, where $u_1,\dots,u_{m+1}$ have all smaller depth than $f$. In $\aF$, we have 
\begin{align*}
t( u_1(x_{1,1},\dots,x_{1,n}),\dots,u_{m+1}(x_{1,1},\dots,x_{1,n})) &= t( u_1(x_{2,1},\dots,x_{2,n}) ,\dots,u_{n+1}(x_{2,1},\dots,x_{2,n}))\\
&\dots\\
&= t( u_1(x_{m,1},\dots,x_{m,n}) ,\dots,u_{n+1}(x_{m,1},\dots,x_{m,n}) )\; .
\end{align*}
Hence it follows from Lemma~\ref{L:QNUreduire2} that there exists $1\le i\le m+1$ such that
\[
u_i(x_{1,1},\dots,x_{1,n}) \approx \cdots\approx u_i(x_{m,1},\dots,x_{m,n})\; .
\]
Therefore, $u_i$ satisfies the loop condition \eqref{E:reduireconditionboucle}, but is of smaller depth than $f$; a contradiction.
\end{proof}
\begin{cor}
Let $m\geq 2$. Then  $L_2^{m+1}$ does not imply any non-trivial loop condition of width $m$, i.e., there exists an algebra which satisfies $L_2^{m+1}$ but does not satisfy any non-trivial loop condition of width $m$.
\end{cor}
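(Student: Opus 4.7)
The plan is to exhibit the free algebra $\aF$ from Lemma~\ref{L:QNUreduire2} as a witness separating $L_2^{m+1}$ from all non-trivial width-$m$ loop conditions. Lemma~\ref{lem:wnu} already guarantees that $\aF$ satisfies no non-trivial loop condition of width $m$, so the only thing left to check is that $\aF$ does satisfy $L_2^{m+1}$.

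For this, I would compare the relation associated with the $(m+1)$-ary weak near unanimity condition that $\aF$ is built to satisfy --- namely the set of $m+1$ tuples in $\{x,y\}^{m+1}$ having exactly one occurrence of $x$ --- with the relation $\mathbb{K}_2^{m+1}$ associated with $L_2^{m+1}$. Every WNU tuple is non-constant, so the inclusion $\{x,y\} \hookrightarrow \{1,2\}$ is a homomorphism from the WNU relation into $\mathbb{K}_2^{m+1}$. Both loop conditions have the same width $m+1$, so Corollary~\ref{cor:homo} applies and shows that the WNU condition implies $L_2^{m+1}$. Since $\aF$ was defined to satisfy the WNU condition, it therefore satisfies $L_2^{m+1}$.

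Combining the two facts finishes the corollary: $\aF$ satisfies $L_2^{m+1}$ while failing every non-trivial loop condition of width $m$, so $L_2^{m+1}$ cannot imply any such condition. There is no real obstacle here; the work has all been done in Lemmas~\ref{L:QNUreduire2} and~\ref{lem:wnu}, and the only minor point is to keep track of the fact that the homomorphism from the WNU relation into $\mathbb{K}_2^{m+1}$ exists precisely because the WNU tuples, having a single coordinate that differs from the others, are never constant.
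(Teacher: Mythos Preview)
Your proposal is correct and essentially identical to the paper's proof. The paper also exhibits $\aF$ as the witness, invoking Lemma~\ref{lem:wnu} for the failure of width-$m$ conditions and the homomorphism from the WNU relation into $\mathbb{K}_2^{m+1}$ (citing Proposition~\ref{prop:loop-lem} where you cite its immediate consequence Corollary~\ref{cor:homo}) for the satisfaction of $L_2^{m+1}$.
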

\begin{proof}
By Proposition~\ref{prop:loop-lem}, the weak near unanimity loop condition of width $m+1$ implies $L_2^{m+1}$; on the other hand, by Lemma~\ref{lem:wnu} it does not imply any non-trivial loop condition of width $m$, and the claim follows.
\end{proof}

\ignore{
\begin{lem}\label{L:QNUreduire}
Let $m\ge 2$. Let $\aF$ be the free algebra, over a set $V$ of generators,  with a single $(m+1)$-ary operation symbol $t$ required to  satisfy the \emph{weak near unanimity identities}
\[
t(x,y,\dots,y)\approx t(y,x,y,\dots,y)\approx\dots \approx t(y,\dots,y,x)\; .
\]
Let $u_1,\dots,u_{m+1},v_1,\dots,v_{m+1}$ in $F$. If $t(u_1,\dots,u_{m+1})=t(v_1,\dots,v_{m+1})$, then there exist $1\le i_0,j_0\le n+1$ such that $u_{i_0}=v_{j_0}$ and such that $u_i=v_j$ for all $1\leq i,j\leq m+1$ with $i\neq i_0$ and $j\neq j_0$.
\end{lem}
\begin{proof}
This follows from the definition of equality in the free algebra, when represented by its action on terms over the symbol $t$.
\end{proof}

\begin{lem}\label{L:QNUreduire2}
Let $M\ge 2$. Let $\aF$ be the free algebra, over a set $V$ of generators,  with a single $(m+1)$-ary operation symbol $t$ required to  satisfy the weak near unanimity identities. Let $u_i^k$ for $1\le i\le m+1$ and $1\le k\le m$ be in $F$. If $t(u_1^1,\dots,u_{m+1}^1)=\dots=t(u_1^m,\dots,u_{m+1}^m)$, then there exists  $1\leq i\leq m+1$ such that $u_i^1=\dots=u_i^m$.
\end{lem}

\begin{proof}
Let $2\le k\le m$. Since $t(u_1^1,\dots,u_{m+1}^1)=t(u_1^k,\dots,u_{m+1}^k)$, by Lemma~\ref{L:QNUreduire} there exist $1\leq i_k,j_k\leq m+1$ such that for all $1\leq i\leq m+1$ distinct from $i_k$ and all $1\leq j\leq m+1$ distinct from $j_k$ we have $u_i^1=u_j^k$. Fix any such indices $i_k,j_k$.

First assume that there is $2< k\le m$ such that $i_k\not=i_2$. Considering any $1\leq s\leq m+1$ distinct from $i_k,i_2$ we see that $u_{i_k}^1=u_s^1=u_{i_2}^1$. It follows that $u_1^1=u_2^1=\dots=u_{m+1}^1$. It follows that we can set $i_2=i_3=\dots=i_{n+1}=1$.

We now assume that $i_2=i_3=\dots=i_n$. We can pick $1\le i\le n+1$ such that $i\not\in\set{i_2,j_2,j_3,\dots,j_n}$. Hence for all $2\le k\le n$ we have $u_i^1=u_i^k$, as required.
\end{proof}

\begin{lem}
Let $n\ge 2$. Then the $(n+1)$-ary quasi near-unanimity loop condition does not imply any non-trivial loop condition of width $n$.
\end{lem}

\begin{proof}
Assume that the $(n+1)$-ary quasi near-unanimity loop condition implies the following condition
\begin{equation}\label{E:reduireconditionboucle}
s(x_1^1,\dots,x_N^1) = s(x_1^2,\dots,x_N^2) =\dots= s(x_1^n,\dots,x_N^n)
\end{equation}
where the $x_i^j$ belong to a set of variable $X$. We can assume that $X$ has $N$ elements or more. Denote by $F$ be the algebra freely generated by $X$ with an $(n+1)$-ary operation symbol $t$ satisfying the quasi near-unanimity identities.

Let $f\in F$ satisfying the condition $\eqref{E:reduireconditionboucle}$, with a smallest possible depth. As the condition is not trivial, $f$ is not a variable, hence we can write $f=t(u_1,\dots,u_{n+1})$, where $u_1,\dots,u_N\in F$ have all smaller depth than $f$. The following equalities hold:
\begin{align*}
t( u_1(x_1^1,\dots,x_N^1),\dots,u_{n+1}(x_1^1,\dots,x_N^1)) &= t( u_1(x_1^2,\dots,x_N^2),\dots,u_{n+1}(x_1^2,\dots,x_N^2))\\
&\dots\\
&= t( u_1(x_1^n,\dots,x_N^n),\dots,u_{n+1}(x_1^n,\dots,x_N^n))
\end{align*}
Hence it follows from Lemma~\ref{L:QNUreduire2} that there is $1\le i\le n+1$ such that
\[
u_i(x_1^1,\dots,x_N^1) = u_i(x_1^2,\dots,x_N^2) = \dots = u_i(x_1^n,\dots,x_N^n)
\]
Therefore $u_i$ satisfies the condition \eqref{E:reduireconditionboucle}, but is of smaller depth than $f$; a contradiction.
\end{proof}
}

\section{Pseudo-loop conditions}\label{sec:pseudo}

\begin{definition}
A \emph{pseudo-loop condition} is a set $P$ of identities
which is of the form
\[
  u_1\circ f(x_{1,1},\ldots, x_{1,n}) \approx u_2\circ f(x_{2,1},\ldots,
  x_{2, n}) \approx \cdots \approx u_m\circ  f(x_{m,1},\ldots,
  x_{m,n})\; ,
\]
where $n\geq 1$, $m\geq 2$, each $x_{i, j}$ is a variable from some finite
set $V$, $f$ is an $n$-ary function symbol,  and $u_1,\ldots,
u_m$ are unary function symbols.
\end{definition}
The width $m$ and arity $n$ of a pseudo-loop condition are defined as for loop conditions, and so is the relation $R_P$ associated with it. For $m,k \geq 2$, we define $pL^m_k$ to be
the pseudo-variant of the $L^m_k$ loop condition, or in other words, the pseudo-loop condition whose associated relation is $\mathbb{K}_k^m$. 

Note that since the only unary function in the projection clone is the identity function, a loop condition is non-trivial if and only if its pseudo-variant is. We remark that the precise analogue of Proposition~\ref{prop:loop-lem} can be shown for pseudo-loop conditions in core clones and pseudo-loops in relations; the proof, which we omit since we are not going to refer to this, is very similar to the one of Proposition~\ref{prop:loop-lem}. However, we will in this section be interested in implications between pseudo-loop conditions which might not hold for arbitrary algebras, but do hold under oligomorphicity and topological closedness. We therefore formulate a variant for this restricted context where, thanks to compactness, only relations on finite powers of clones have to be considered.

\begin{prop}\label{prop:ploop-lem}
  Let $P$ be a pseudo-loop condition of width $m\geq 2$ and arity $n\geq 1$, and let $\mathscr{C}$ be a closed oligomorphic core clone. The following are equivalent:
  \begin{enumerate}[label = \emph{(\alph*)}]
      \item $\mathscr{C}$ satisfies $P$ locally;

      \item $\mathscr{C}$ satisfies $P$; 

      \item for every $N \geq 1$ and every $R \subseteq (C^N)^m$
      preserved by $\mathscr{C}^N$, if there is a homomorphism from $R_{P}$ to
      $R$, then $R$ has a pseudo-loop with respect to $\mathscr{C}$;

      \item for every $N\geq 1$ and every $R \subseteq (C^N)^m$
      preserved by $\mathscr{C}^N$, if there is an injective homomorphism from $R_{P}$ to
      $R$, then $R$ has a pseudo-loop with respect to $\mathscr{C}$. 
  \end{enumerate}
\end{prop}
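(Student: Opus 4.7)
I would prove the equivalences via the cycle $(b) \Rightarrow (a) \Rightarrow (c) \Rightarrow (d) \Rightarrow (a) \Rightarrow (b)$; the implications $(b) \Rightarrow (a)$ and $(c) \Rightarrow (d)$ are immediate, leaving $(a) \Rightarrow (c)$, $(d) \Rightarrow (a)$, and the classical local-to-global step $(a) \Rightarrow (b)$.

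For $(a) \Rightarrow (c)$: Given $R \subseteq (C^N)^m$ preserved by $\mathscr{C}^N$ with a homomorphism $h \colon V \to C^N$ from $R_P$ to $R$, let $F \subseteq C$ be the finite set of coordinates appearing in $h(V)$, and pick by $(a)$ witnesses $f \in \mathscr{C}_n$ and $u_1, \ldots, u_m \in \mathscr{C}_1$ satisfying $P$ on $F$. Viewed via the diagonal action of $\mathscr{C}$ on $C^N$, they yield $u_1 \circ f(h(x_{1,\cdot})) = \cdots = u_m \circ f(h(x_{m,\cdot}))$ in $C^N$ coordinatewise. Setting $r_i := f(h(x_{i,1}), \ldots, h(x_{i,n}))$, the tuple $(r_1, \ldots, r_m)$ lies in $R$ since $f$ preserves $R$ and the generating tuples $(h(x_{1,j}), \ldots, h(x_{m,j}))$ do, and $u_i(r_i) = u_j(r_j)$ for all $i, j$. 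Because $\mathscr{C}^N$ is a core, each unary $u_i$ is a pointwise limit of automorphisms in $\Gr(\mathscr{C}^N)$ and hence sends every element into its own orbit; thus all $r_i$ share the orbit of the common value $u_1(r_1)$, producing the required pseudo-loop.

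For $(d) \Rightarrow (a)$: Fix finite $F \subseteq C$ with $|F| \geq 2$, set $N := |F^V|$, and define $h \colon V \to C^N$ by $h(v) := (\phi(v))_{\phi \in F^V}$; this is injective. Let $r_j := (h(x_{1,j}), \ldots, h(x_{m,j}))$ for $1 \leq j \leq n$ and set $R := \{g(r_1, \ldots, r_n) \mid g \in \mathscr{C}_n\} \subseteq (C^N)^m$. Because $\mathscr{C}$ is closed under composition, $R$ is preserved by $\mathscr{C}^N$; and $h$ extends to an injective homomorphism from $R_P$ into $R$, since each $r_j \in R$ via the $j$-th projection. Applying $(d)$ yields $g \in \mathscr{C}_n$ and $\alpha_1, \ldots, \alpha_m \in \Gr(\mathscr{C})$ with $\alpha_i \circ g(h(x_{i,\cdot}))$ independent of $i$; reading this coordinatewise over $\phi \in F^V$ gives $\alpha_1 \circ g(\phi(x_{1,\cdot})) = \cdots = \alpha_m \circ g(\phi(x_{m,\cdot}))$ for every $\phi \colon V \to F$, which is precisely satisfaction of $P$ on $F$.

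Finally, $(a) \Rightarrow (b)$ is the standard local-to-global compactness step for pseudo-h1 identities in closed oligomorphic cores. The plan is: use the core property to replace each local $u_{i,F}$ by an automorphism $\alpha_{i,F}$ agreeing with it on $f_F(F^n)$; normalize by left-composing with $\alpha_{1,F}^{-1}$ to force $\alpha_{1,F} = \mathrm{id}$; invoke oligomorphicity (giving only finitely many $\Gr(\mathscr{C})$-orbits on $C^{F^n}$) to pass, via a König/pigeonhole argument, to a cofinal subfamily on which the restrictions $f_F|_{F^n}$ all lie in a single orbit; align them by precomposing with suitable automorphisms; and use topological closedness of $\mathscr{C}$ to extract a pointwise limit yielding a global witness. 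I expect this step to be the main technical obstacle, since $\mathscr{C}_n \times \mathscr{C}_1^m$ is not compact in the pointwise topology and all three of closedness, oligomorphicity, and the core property must conspire to produce the limit; the other directions essentially mirror Proposition~\ref{prop:loop-lem}, with the core hypothesis invoked only to convert equality under unary operations into common orbit membership.
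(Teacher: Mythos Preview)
Your proposal is correct and follows essentially the same route as the paper. The paper organizes the cycle as $(a)\Rightarrow(b)\Rightarrow(c)\Rightarrow(d)\Rightarrow(a)$, citing \cite[Lemma~4.2]{Topo} for the compactness step $(a)\Rightarrow(b)$ rather than sketching it; your direct argument for $(a)\Rightarrow(c)$ (choosing the finite set $F$ to be the coordinates appearing in $h(V)$) is a small but harmless variation on the paper's $(b)\Rightarrow(c)$, and your $(d)\Rightarrow(a)$ is the same free-algebra/projection construction the paper uses.
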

\begin{proof}
  Suppose that
  \[
    u_1 \circ f(x_{1,1},\ldots, x_{1,n}) \approx u_2 \circ f(x_{2,1},\ldots,
    x_{2, n}) \approx \ldots \approx u_m \circ f(x_{m,1},\ldots, x_{m,n})
  \]
  is the pseudo-loop condition $P$, and write $V := \{x_{1,1}, \ldots, x_{m,n}\}$; so $R_P\subseteq V^m$.

  The argument that (a) implies (b) 
  is analogous to the one in~\cite[Lemma~4.2]{Topo}. 

  The proof that (b) implies (c) is basically identical with the corresponding argument that (a) implies (b) in Proposition~\ref{prop:loop-lem}, but we repeat it for the convenience of the reader.  Since $P$ is satisfied in $\mathscr{C}$,
  it is also satisfied in $\mathscr{C}^N$; we will identify the symbols $f, u_1, \ldots,
  u_m$ with the functions in $\mathscr{C}^N$
  which witness the satisfaction. Fix a mapping $h 
  \colon V \to C^N$ which is a homomorphism from $R_P$ to $R$. Setting $a_i := f(h(x_{i,1}),\ldots, h(x_{i,n})) \in C^N$ for all  $1\leq i\leq m$, we have  $u_1(a_1) =  \cdots = u_m(a_m)$. Moreover, since  $h$ is a homomorphism and since $R$ is preserved by $f$ it also follows that $(a_1,\ldots, a_m) \in
  R$. Finally, $\mathscr{C}$ being a core clone implies that
  $\mathscr{C}^N$ is also a core clone. Hence there are 
  $\alpha_1,\ldots, \alpha_m \in \gr(\mathscr{C}^N)$ such that $\alpha_1(a_1) =
  \alpha_2(a_2) = \ldots = \alpha_m(a_m)$. Therefore $(a_1, \ldots, a_m)$ is a pseudo-loop of $R$ with respect to $\cC^N$.

  It is trivial that (c) implies (d).

To see that (d) implies (a), let $F \subseteq C$ be finite. Let $h\colon V\rightarrow C^{F^V}$ be the mapping which sends every variable $v$ in $V$ to the projection
\begin{align*}
\pi_v\colon  {F^V} \to C,\;\; t&\mapsto t(v)\; 
\end{align*}
onto $v$. Let $R'$ be the image of $R_P$ under $h$; we can thus write $R'=\{r_1,\ldots, r_n\}$, where $r_j=(h(x_{1,j}),\ldots,h(x_{m,j}))$ for all $1\leq j\leq n$. Set
$$
R:=\{f(r_1,\ldots,r_{n})\;|\; f\in \mathscr{C}_n^{F^V} \}\; .
$$  
Then $R$ is preserved by $\mathscr{C}^{F^V}$, and $h$ witnesses that $R_P$ has an injective homomorphism into $R'\subseteq R$. Hence, by~(c) and since $F^V$ is finite, the relation $R$ has a pseudo-loop, which is by definition of the form $f(r_1,\ldots,r_{n})$ for some $f\in \mathscr{C}_n^{F^V}$. Expanding, this means that the pseudo-loop is of the form
\[
f((h(x_{1,1}),\ldots,h(x_{m,1})),\ldots, (h(x_{1,n}),\ldots,h(x_{m,n})))\; ,
\]
so 
\[
f(h(x_{1,1}),\ldots,h(x_{1,n})),\ldots,f(h(x_{m,1}),\ldots,h(x_{m,n}))
\]
all belong to the same orbit with respect to the action of $\Gr(\mathscr{C}^{F^V})$. Hence, 
\[
u_1(f(h(x_{1,1}),\ldots,h(x_{1,n})))=\cdots= u_m(f(h(x_{m,1}),\ldots,h(x_{m,n})))
\]
for some $u_1,\ldots,u_m\in\Gr(\mathscr{C}^{F^V})$. By the definition of $h$, this yields
\[
u_1(f(\pi_{x_{1,1}},\ldots,\pi_{x_{1,n}}))=\cdots= u_m(f(\pi_{x_{m,1}},\ldots,\pi_{x_{m,n}}))\; .
\]
But this means that $u_1,\ldots,u_m,f$, viewed as functions of $\mathscr{C}$, witness the satisfaction of $P$ on $F$. Since $F$ was arbitrary, $\mathscr{C}$  satisfies $P$ locally.
\ignore{  
To see that (d) implies (a), let $F \subseteq C$ be finite. We write $V:=\{x_{1,1}, \ldots, x_{m,n}\}$, and set $k:=|V|$. 
Let $M:=|F|^{k}$ be the number of functions from $V$ to $F$, and enumerate all such functions as tuples
  \[
    (y^1_{1,1}, \ldots, y^1_{n,m}), \ldots, (y^M_{1,1}, \ldots, y^M_{n,m})\; ,
  \]
  where each tuple $(y^\ell_{1,1}, \ldots, y^\ell_{n,m})$ lists the values of $x_{1,1},\ldots,x_{m,n}$ under the function it represents.  For all $1\leq i\leq m$ and all $1\leq j\leq n$
  set $\mathbf{y}_{i,j} := (y^1_{i,j}, \ldots, y^M_{i,j})$. On the set $\{\mathbf{y}_{1,1},\ldots,\mathbf{y}_{n,m}\}$, set $R'$ to be the relation
  which is the image of $R_P$ under the mapping $h\colon V\rightarrow C^M$ which sends $x_{i,j}$ to $\mathbf{y}_{i,j}$ for all $1\leq i\leq m$ and all $1\leq j\leq n$ (this mapping is the tuple containing all mappings from $V$ to $F$ as listed before). More precisely, setting $\mathbf{r}_{j}:=(h(x_{1,j}),\ldots,h(x_{m,j}))$ for all $1\leq j\leq n$, we have that $R'=\{\mathbf{r}_{1},\ldots,\mathbf{r}_{n}\}$.  Let $R \subseteq
  \left( C^M \right)^m$  be the smallest relation containing $R'$ which is preserved by $\mathscr{C}$. Thus, $R$ is obtained by applying, in a componentwise fashion, all $n$-ary functions in $\mathscr{C}^M$ to the $n$ elements of $R'$. That is, 
  \[
    R = \{ f(\mathbf{r}_1, \ldots, \mathbf{r}_n) \;|\; f \in \mathscr{C}^M_n\}\; ,
  \]
   or more precisely, 
  \[
    R = \{ (f(\mathbf{z}_1^1, \ldots, \mathbf{z}_n^1), \ldots,
    f(\mathbf{z}_1^m, \ldots, \mathbf{z}_n^m)) \;|\; f \in \mathscr{C}^M_n \
    \text{and} \ (\mathbf{z}_j^1, \ldots, \mathbf{z}_j^m)=\mathbf{r}_j \ \text{for all}
    \ 1\leq j\leq n\}.
  \]
  Since $h$ is an injective homomorphism from $R_P$ into $R'\subseteq R$, by~(d) the relation $R$ has a pseudo-loop $(\mathbf{a}_1, \ldots, \mathbf{a}_m)$ with respect to $\mathscr{C}^M$.

By the definition of a pseudo-loop, there are unary functions
  $u_1, \ldots, u_m \in \mathscr{C}^M$ such that $u_1(\mathbf{a}_1) = \cdots = u_m(\mathbf{a}_m)$.
  By the definition of $R$, there exists $f \in \mathscr{C}^M_n$ such that 
  \[
  (\mathbf{a}_1, \ldots, \mathbf{a}_m)= (f(\mathbf{z}_1^1, \ldots, \mathbf{z}_n^1), \ldots,
    f(\mathbf{z}_1^m, \ldots, \mathbf{z}_n^m))\; ,
  \]
  where 
  \[
   \ (\mathbf{z}_j^1, \ldots, \mathbf{z}_j^m)=\mathbf{r}_j = (h(x_{1,j}),\ldots,h(x_{m,j}))
   \]
   for all $1\leq j\leq n$.
   Hence, 
  \[
    u_1 \circ f(h(x_{1,1}), \ldots, h(x_{1, n})) = \cdots = u_m \circ
    f(h(x_{m,1}), \ldots, h(x_{m, n}))\; ,
  \]
  and thus
  \[
    u_1 \circ f(\mathbf{y}_{1,1}, \ldots, \mathbf{y}_{1, n}) = \cdots = u_m \circ
    f(\mathbf{y}_{m,1}, \ldots,\mathbf{y}_{m, n})\; .
  \]
  But that means that 
  \[
    u_1 \circ f({y}_{1,1}^\ell, \ldots, {y}_{1, n}^\ell) = \cdots = u_m \circ
    f({y}_{m,1}^\ell, \ldots,{y}_{m, n}^\ell)\; .
  \]
  for all $1\leq \ell\leq M$, which means that $P$ holds for all assignments of values in $F$ to the variables in $V$. Since $F$ was arbitrary, $\mathscr{C}$  satisfies $P$ locally.
  }
\end{proof}

It is not clear whether the precise analogue of Lemma~\ref{lem:reduce-clique} holds for pseudo-loop conditions in general. Under the additional assumptions of topological closedness and oligomorphicity, it does hold for core clones. The proof is similar to that of Lemma~\ref{lem:reduce-clique}, but uses (indirectly) induction over the number of orbits of the group action. It is also worthwhile remarking that the proof moreover fails for $m> 2$; but as we shall see later, there is a satisfactory substitute showing a stronger statement in this restricted context.

\begin{lem}\label{lem:pseudo-lcondition-recude}
  Let $\mathscr{C}$ be a closed oligomorphic core clone satisfying $pL^2_{k+1}$, where $k
  \geq 4$. Then $\mathscr{C}$ satisfies $pL^2_k$. 
\end{lem}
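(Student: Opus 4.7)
My plan is to adapt the proof of Lemma~\ref{lem:reduce-clique} for $m=2$ to the oligomorphic closed core setting, with an additional orbit-based case analysis in place of the final step that extracted a loop in the original argument.

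By Proposition~\ref{prop:ploop-lem}(d), it suffices to show that for every $N \ge 1$ and every $R \subseteq (C^N)^2$ preserved by $\mathscr{C}^N$ with an injective homomorphism from $\mathbb{K}_k^2$ into $R$, the relation $R$ has a pseudo-loop with respect to $\mathscr{C}$. Let $c_1,\dots,c_k \in C^N$ denote the image of such a homomorphism, set $B:=C^N$, and suppose for contradiction that $R$ has no pseudo-loop. Then the $c_i$ lie in pairwise distinct $\Gr(\mathscr{C}^N)$-orbits, since any pair in $R$ whose components lie in a common orbit is itself a pseudo-loop.

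Following the construction of Lemma~\ref{lem:reduce-clique} specialized to $m=2$, I define $Q \subseteq (B^2)^2$ by the very same pp-formula: $((a_1,b_1),(a_2,b_2)) \in Q$ iff there exist $x_1,\dots,x_{k-1} \in B$ with $\phi_k^R(x_2,\dots,x_{k-1},a_2,b_2)$, $\phi_3^R(x_1,a_1,b_1)$, and $(x_i,x_j)\in R$ for all distinct $i,j$. The verbatim verification from Lemma~\ref{lem:reduce-clique} shows that the set $S := \{(c_1,c_2),(c_2,c_3),\dots,(c_{k-1},c_k),(c_k,c_1),(c_1,c_3)\} \subseteq B^2$ of $k+1$ distinct elements induces an injective homomorphism $\mathbb{K}_{k+1}^2 \to Q$, using the assumption $k \ge 4$. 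Since $Q$ is preserved by $\mathscr{C}^{2N}$, Proposition~\ref{prop:ploop-lem}(d) applied to $\mathscr{C}$ and the pseudo-loop condition $pL_{k+1}^2$ (with $2N$ in place of $N$) yields a pseudo-loop of $Q$ of the form $((a,b),(\alpha(a),\alpha(b)))$ for some $\alpha \in \Gr(\mathscr{C})$ acting componentwise on $C^N$; let $x_1,\dots,x_{k-1}$ be the associated witnesses.

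Next I split into two cases according to the orbits of the $x_i$ under $\Gr(\mathscr{C}^N)$. If $x_1$ lies in the same orbit as some $x_j$ with $2 \le j \le k-1$, then $(x_1,x_j) \in R$ is a pseudo-loop of $R$, contradicting the assumption. Otherwise, setting $y_i := \alpha^{-1}(x_i)$ for $i \ge 2$ and applying $\alpha^{-1}$ to the first witness condition, the tuple $(y_2,\dots,y_{k-1},a,b)$ forms a $K_k$-clique in $R$. Combining this with the $K_3$-clique $(x_1,a,b)$ and the non-pseudo-loop assumption, a routine orbit tracking shows that $\{x_1,y_2,\dots,y_{k-1},a,b\}$ lie in pairwise distinct orbits, and all pairs among these $k+1$ elements lie in $R$ \emph{except possibly} the pairs $(x_1,y_i)$ and $(y_i,x_1)$ for $i \ge 2$.

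The main obstacle is to bridge this last gap. The direct attempt to apply $\alpha^{-1}$ to $(x_1,x_i) \in R$ produces only $(\alpha^{-1}(x_1),y_i) \in R$, not $(x_1,y_i) \in R$, because $\alpha$ need not fix $x_1$. This is exactly the point where the hinted induction on the number of orbits of the group action enters and where the proof is genuinely specific to $m=2$: I expect to induct on the number $s$ of $\Gr(\mathscr{C}^N)$-orbits on $C^N$, with the transitive base case $s=1$ being immediate (every element of $R$ is then a pseudo-loop), and the inductive step using the partial configuration of the second case to pass to a suitable pp-definable subuniverse on which $\mathscr{C}^N$ acts with strictly fewer orbits, where the inductive hypothesis supplies either the missing relations $(x_1,y_i) \in R$ (so that the $k+1$ elements form a $K_{k+1}$ in $R$ and $pL_{k+1}^2$ forces a pseudo-loop of $R$) or a pseudo-loop of $R$ directly. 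The failure of this strategy for $m>2$ is explained by the fact that the analogous orbit tracking for $m$-ary tuples is no longer controlled by the binary orbit partition of $C^N$.
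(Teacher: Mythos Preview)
Your setup through the construction of $Q$, the verification that $\mathbb{K}_{k+1}^2$ embeds into $Q$, and the extraction of a pseudo-loop in $Q$ is correct and matches the paper. The gap is in the finishing step: you have correctly diagnosed that an induction on the number of orbits (equivalently, a minimality assumption on a subuniverse) is needed, but you have not identified \emph{which} pp-definable subuniverse to pass to, and your stated expectation that the inductive hypothesis might ``supply the missing relations $(x_1,y_i)\in R$'' is not something induction can do. The attempt to assemble a $\mathbb{K}_{k+1}^2$ inside $R$ from $\{x_1,y_2,\dots,y_{k-1},a,b\}$ is a dead end.

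The paper's missing ingredient is the following. First, impose the minimality at the outset: since $\mathscr{C}^N$ is oligomorphic, it has only finitely many subuniverses, so one may assume that no proper subuniverse of $C^N$ carries an injective copy of $\mathbb{K}_k^2$ inside $R$ (this is exactly the base of your intended induction). After obtaining the pseudo-loop in $Q$ with witnesses $x_1,\dots,x_{k-1}$, let $O$ be the $\Gr(\mathscr{C}^N)$-orbit of $x_1$ and set
\[
O^+:=\{d\in C^N\mid \exists q\in O\ R(q,d)\}.
\]
This is pp-definable (since $\mathscr{C}$ is a core), hence a subuniverse, and it is $\Gr(\mathscr{C}^N)$-invariant. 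Conditions~(a) and~(c) give $x_2,\dots,x_{k-1}\in O^+$ and $a,b\in O^+$ (in your notation; equivalently $x_k,x_{k+1}\in O^+$ in the paper's), so by condition~(b) the set $O^+$ contains a copy of $\mathbb{K}_k^2$. Minimality forces $O^+=C^N$, whence $x_1\in O^+$, i.e., $(\beta(x_1),x_1)\in R$ for some $\beta\in\Gr(\mathscr{C}^N)$; this is the sought pseudo-loop of $R$, and the contradiction. Note that there is no need for your $\alpha^{-1}$-translation or your Case~1/Case~2 split; both are subsumed by the single observation $x_1\in O^+$.
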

 \begin{proof}
To prove the lemma, by  Proposition~\ref{prop:ploop-lem} it is sufficient to demonstrate that for all $N \geq 1$ and every $R \subseteq
  (C^N)^2$ preserved by $\mathscr{C}^N$, if there is an injective homomorphism from
  $\mathbb{K}^2_k$ to $R$, then $R$ has a pseudo-loop.
  We proceed by contradiction. Suppose therefore that there  exist $N \geq 1$ and $R
  \subseteq (C^N)^2$ preserved by $\mathscr{C}^N$ such that there is an injective homomorphism from $\mathbb{K}^2_k$ to $R$, but $R$ has no pseudo-loops. Since $\mathscr{C}^N$ is oligomorphic and every subuniverse of $\mathscr{C}^N$ is
  a union of orbits of $\Gr(\mathscr{C}^N)$, it follows that $\mathscr{C}^N$ has finitely many
  subuniverses. Hence, by restricting $\mathscr{C}^N$ and $R$ to a subuniverse of $\mathscr{C}^N$ if necessary, we may
  assume that no restriction of $R$ to a subuniverse of $\mathscr{C}^N$ contains an injective homomorphism from $\mathbb{K}^2_k$ to $R$. For notational convenience, we set $D:=C^N$, and $\cD:=\cC^N$.

  In the following, the pp formula $\phi_\ell^R$ is defined as in the proof of Lemma~\ref{lem:reduce-clique}. Define $Q \subseteq (D^2)^2$ by $((a_1, b_1), (a_2,b_2)) \in Q$ if and only if
  there exist elements $x_1, \ldots, x_{k - 1} \in D$ such that
  \begin{enumerate}[label=(\alph*)]
      \item $\phi_{k-1}^R(x_1, \ldots, x_{k - 1})$ holds\label{con-2-1};
      
    \item  $\phi_{k}^R(x_2, \ldots, x_{k - 1}, a_2, b_2)$ holds\label{con-2-2};

    \item $\phi_3^R(x_1, a_1, b_1)$ holds\label{con-2-3}.
  \end{enumerate}
  Then $(D^2,Q)$ is a pp-power of $(D,R)$, and so is preserved by
  $\cD^2$. We proceed exactly as in Lemma~\ref{lem:reduce-clique}
  in the case where $m=2$, and conclude that $\mathbb{K}_{k + 1}^2$ homomorphically maps into $Q$.


Recall that $\mathbb{K}_{k+1}^2$ is the
  associated relation of $L^2_{k +1}$.  Since $Q$ is preserved by $\cD^2$, the fact that $\cD$ satisfies $L^2_{k +1}$ therefore implies that $Q$ has a pseudo-loop, by 
  Proposition~\ref{prop:ploop-lem}. Denote this pseudo-loop by $((\gamma(x_k),\gamma(x_{k+1})),(x_k,x_{k+1}))$, where $\gamma\in\Gr(\cD)$, and $(x_k,x_{k+1})\in D^2$.
  By the definition of $Q$, this means that there exist $x_1,\ldots,x_{k-1}\in D$ such that
  \begin{enumerate}[label=(\alph*)]
    \item $\phi_{k-1}^R(x_1, \ldots, x_{k - 1})$ holds\label{con-2-4};

    \item  $\phi_{k}^R(x_2, \ldots, x_{k +1})$ holds\label{con-2-5};

    \item $\phi_3^R(x_1, \gamma(x_k), \gamma(x_{k+1}))$ holds\label{con-2-6}.
  \end{enumerate}
  Let $O$ be the orbit of $x_1$ with respect to $\gr(\mathscr{D})$, and set $O^+:=\{d\in D\;|\;\exists q\in O\; (R(q,d))\}
  $. 
  It follows from~\ref{con-2-4} and~\ref{con-2-6} that $x_2, \ldots,
  x_{k+1} \in O^+$. Since $\cD$ is a core, $O$ is pp-definable by $\cD$, and so is $O^+$ since its definition from $O$ is primitive positive. Hence, $O^+$ is a subuniverse of $\cD$. By~\ref{con-2-5}, $O^+$ contains a homomorphic image of 
  $\mathbb{K}^2_k$. Our minimality assumption on the counterexample then implies $D=
  O^+$, and in particular, $x_1\in O^+$. Hence, by the definition of $O^+$, there is $\beta \in \gr(\cD)$ such that
  $(x_1, \beta(x_1)) \in R$, contradicting the assumption that $R$ has no
  pseudo-loops.
\end{proof}

\begin{cor}\label{cor:weakestploop}
  Let $\mathscr{C}$ be a closed oligomorphic core clone satisfying a
  non-trivial pseudo-loop condition of width $2$. Then $\mathscr{C}$ satisfies
  $pL^2_4$. 
\end{cor}

For $n\geq 1$, we call sets of identities of the following form \emph{pseudo-Taylor} identities of arity $n$:
\begin{align}
  u_1 \circ t(x_{1,1}, \ldots, x_{1, n}) &\approx v_1 \circ t(y_{1,1}, \ldots,
  y_{1,n}) \notag \\
  & \ldots \label{eq-3}\\
  u_n \circ t(x_{n,1}, \ldots, x_{n, n}) &\approx v_n \circ t(y_{n,1}, \ldots,
  y_{n,n}) \notag
\end{align}
where $t$ is an $n$-ary functional symbol, $u_1, v_1, \ldots, u_n, v_n$ are
unary function symbols, $x_{i,i} = x$, $y_{i,i} = y$, and $x_{i, j}, y_{i, j}
\in \{x, y\}$ for all $1\leq i, j\leq n$.

The following is an adaptation, to the oligomorphic context, of an unpublished trick due to M.~Ol\v{s}\'{a}k, which he used in order to provide a new proof of the theorem stating that every finite idempotent algebra with a Taylor term satisfies $L^2_3$.

\begin{lem}\label{lem:pseudo-loop-lemma}
  Let $\mathscr{C}$ be a closed oligomorphic core clone. Let $n\geq 1$, and suppose that  $\mathscr{C}$ satisfies a set of $n$-ary pseudo-Taylor  identities. If $R\subseteq C^2$ is a binary relation preserved by
  $\mathscr{C}$ such that $\mathbb{K}^2_{2n}$ homomorphically maps to $R$, then $R$ has a  pseudo-loop with respect to $\cC$. 
\end{lem}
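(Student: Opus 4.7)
My plan is to produce the pseudo-loop of $R$ directly by constructing a pair $(A,B)\in R$ such that $u(A)=v(B)$ for some unary $u,v\in\mathscr{C}$; the core-clone assumption will then force $A$ and $B$ into a common $\Gr(\mathscr{C})$-orbit, and so $(A,B)$ will be the sought pseudo-loop. The orbit-collapse step uses density of $\Gr(\mathscr{C})$ in the unary part of $\mathscr{C}$: one picks $\gamma,\delta\in\Gr(\mathscr{C})$ agreeing with $u,v$ on the finite set $\{A,B\}$, whence $\gamma(A)=\delta(B)$ and so $A=\gamma^{-1}\delta(B)$.

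Let $c_1,\dots,c_{2n}\in C$ be the image of the given homomorphism from $\mathbb{K}^2_{2n}$ to $R$, so that $(c_p,c_q)\in R$ for all distinct $p,q$. For each $i\in\{1,\dots,n\}$, substituting $x=c_{2i-1}$ and $y=c_{2i}$ into the $i$-th pseudo-Taylor identity produces
\[
u_i(t(X_i)) = v_i(t(Y_i)),
\]
where $X_i,Y_i\in\{c_{2i-1},c_{2i}\}^n$ have $X_{i,i}=c_{2i-1}$ and $Y_{i,i}=c_{2i}$. Each individual pair $(t(X_i),t(Y_i))$ already lies in a common $\Gr(\mathscr{C})$-orbit by the density argument, but in order to have it lie in $R$ one would need the coordinatewise pair $(X_{i,j},Y_{i,j})$ to consist of distinct clique elements for \emph{every} $j$, which need not hold at positions $j\neq i$ where the identity happens to use matching variables.

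To remedy this I would follow the syntactic composition of Lemma~\ref{lem:big-clique} and pass to the iterated composite $h=t\star t\star t\in\Clo(\mathscr{C})$ of arity $n^3$, evaluated at a pair of $n\times n\times n$ substitution tensors $Z^1,Z^2$ with entries drawn from $\{c_1,\dots,c_{2n}\}$. The tensors would be assembled by mimicking the three-fold layering from Lemma~\ref{lem:big-clique} so that (i) at every entry $(i,j,k)$ one has $Z^1_{i,j,k}\neq Z^2_{i,j,k}$, and hence by preservation $(h(Z^1),h(Z^2))\in R$; and (ii) the three layers of pseudo-Taylor structure, together with the unary symbols $u_i,v_i$, collapse into a single equation $U(h(Z^1))=V(h(Z^2))$ for some $U,V\in\mathscr{C}_1$, obtained by absorbing the $u_i$'s (respectively $v_i$'s) into functions of $\mathscr{C}$ built from further copies of $t$. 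Combined with the orbit-collapse step above, this yields the required pseudo-loop $(h(Z^1),h(Z^2))$ of $R$.

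The main obstacle will be reconciling (i) and (ii) simultaneously: each compositional layer imposes its own substitution constraint, and the families $u_i,v_i$ must be tracked through every layer so as to coalesce into single unary functions $U,V$ at the outside. In the classical Lemma~\ref{lem:big-clique}, idempotency plays precisely the role of collapsing inner layers to the identity; in the present setting that role must be replaced by the flexibility provided by the unary symbols together with the core property, which is exactly the adaptation of Ol\v{s}\'{a}k's unpublished trick to the pseudo-oligomorphic context.
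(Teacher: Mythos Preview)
Your proposal has a genuine gap at exactly the point you flag as ``the main obstacle,'' and the paper's proof resolves it by an entirely different mechanism.

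The compositional strategy you import from Lemma~\ref{lem:big-clique} depends essentially on idempotency: there, after the innermost layer collapses, the $i$-th argument of the outer $t$ becomes literally $t(x_{i,1},\dots,x_{i,n})$ on one side and $t(y_{i,1},\dots,y_{i,n})$ on the other, and the $i$-th Taylor identity makes these \emph{equal}. The outer $t$ is then applied to equal tuples. In the pseudo setting the $i$-th identity only gives $u_i(t(x_{i,\cdot}))=v_i(t(y_{i,\cdot}))$ with $u_i,v_i$ depending on $i$; pushing these through the outermost $t$ to obtain a single equation $U(h(Z^1))=V(h(Z^2))$ would require some commutation of $t$ past the family $(u_i)_i$, and neither the core property nor density of $\Gr(\mathscr{C})$ supplies that. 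Your last paragraph defers this to ``the adaptation of Ol\v{s}\'{a}k's unpublished trick,'' but that trick is not compositional at all.

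The paper's proof is a descent on subuniverses. Oligomorphicity implies $\mathscr{C}$ has only finitely many subuniverses, so one may assume no proper subuniverse contains a copy of $\mathbb{K}^2_{2n}$. Writing the clique as $\{a_1,\dots,a_n,b_1,\dots,b_n\}$, one evaluates $t$ at $2n$ tuples obtained by substituting $(x,y)\mapsto(a_i,b_i)$ and $(x,y)\mapsto(b_i,a_i)$ into the left-hand sides of the $n$ identities. These $2n$ elements are pairwise $R$-related (they form a new $\mathbb{K}^2_{2n}$), and, crucially, each of them is $R$-related to some element of the orbit $O$ of the single pivot $t(a_1,\dots,a_n)$: one half directly, the other half via the pseudo-Taylor identities, which (after replacing $u_i,v_i$ by group elements using the core property) move the pivot within $O$. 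Hence the new clique lies in the subuniverse $O^+=\{c:\exists q\in O\,(q,c)\in R\}$, which is proper if $R$ has no pseudo-loop, contradicting minimality. No merging of the $u_i$ into a single $U$ is ever attempted; instead the $n$ identities are used separately, each contributing an edge to the pivot's orbit.
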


\begin{proof}
  Suppose that there exists a counter-example to the statement of
  the lemma, consisting of a closed oligomorphic core clone $\mathscr{C}$ and a relation $R$ satisfying the assumptions, but without a pseudo-loop. 
  Since $\mathscr{C}$ is oligomorphic and every subuniverse of $\mathscr{C}$ is
  a union of orbits of $\Gr(\mathscr{C})$, it follows that $\mathscr{C}$ has finitely many
  subuniverses. Hence, by restricting $\mathscr{C}$ and $R$ to a subuniverse of $\mathscr{C}$ if necessary, we may
  assume that no restriction of $R$ to a subuniverse of $\mathscr{C}$ admits a homomorphism from 
  $\mathbb{K}^2_{2n}$. Let $t\in \mathscr{C}_n$ be a pseudo-Taylor term, i.e., it witnesses, together with unary functions
  $u_1,\ldots u_n, v_1, \ldots v_n \in \mathscr{C}$, the satisfaction of the identities \eqref{eq-3}. 
  Since any homomorphism from $\mathbb{K}^2_{2n}$ to $R$ which is not an embedding would yield a loop in $R$, there is a set $S:=\{a_1, \ldots,a_n,b_1,\ldots b_{n}\}\subseteq C$ inducing  $\mathbb{K}^2_{2n}$ in $R$.

 Recall that $x$ and $y$ are the variables in~\eqref{eq-3}, and define for all $1\leq i\leq n$ assignments $\phi_i,\psi_i \colon\{x,y\}\rightarrow S$ to these variables defined by $\phi_i(x)=\psi_i(y):=a_i$ and $\phi_i(y)=\psi_i(x):=b_i$.
  By~\eqref{eq-3}, and since 
  $\gr(\mathscr{C})$ is dense in the unary functions of
  $\mathscr{C}$, it follows that there exist $\alpha_1, \ldots,
  \alpha_n \in \gr(\mathscr{C})$ such that
  \begin{align}
    t(\phi_1(x_{1,1}), \ldots, \phi_1(x_{1, n})) &= \alpha_1 \circ
    t(\phi_1(y_{1,1}), \ldots, \phi_1(y_{1,n})) \notag\\
    & \ldots  \label{eq-5}\\
    t(\phi_n(x_{n,1}), \ldots, \phi_n(x_{n, n})) &= \alpha_n
    \circ t(\phi_n(y_{n,1}), \ldots, \phi_n(y_{n,n})). \notag
  \end{align}
  Moreover, any two elements among 
  \begin{align}
    t(\phi_1(x_{1,1}), &\ldots, \phi_1(x_{1, n}))\notag\\
    t(\psi_1(x_{1,1}), &\ldots, \psi_1(x_{1, n}))\notag\\ 
    & \ldots \label{eq-4}\\
    t(\phi_n(x_{n,1}), &\ldots, \phi_n(x_{n, n}))\notag\\ 
    t(\psi_n(x_{n,1}), &\ldots, \psi_n(x_{n, n}))\notag
  \end{align}
  are related in $R$. To see this, note that $R$ is preserved by $t$ and that
  every argument of $t$ which appears in this list is an element of $S$, which induces $\mathbb{K}^2_{2n}$. Hence it is sufficient to show that for every $1\leq j\leq n$ the elements $\phi_1(x_{1,j}), \psi_1(x_{1,j}), \ldots,
  \phi_n(x_{n,j}), \psi_n(x_{n,j})$ are pairwise distinct. This is true for all the pairs $\phi_i(x_{i,j}), \psi_i(x_{i,j})\in\{a_i,b_i\}$, since $\phi_i(x_{i,j})=a_i$ if and only if $\psi_i(x_{i,j})=b_i$; moreover, the range of $\phi_i$ is disjoint from that of $\phi_k$ and that of $\psi_k$ whenever $i\neq k$, and vice-versa. Since $R$ does not contain any loop, it follows that the elements of~\eqref{eq-4} are pairwise distinct and induce $\mathbb{K}^2_{2n}$ in $R$.

We next claim that $t(a_1,\ldots,a_n)$ is related in $R$ to $t(\psi_i(x_{i,1}),\ldots, \psi_i(x_{i, n}))$ for all $1\leq i\leq n$. To this end, note that whenever $j\neq i$, then $a_j$ is obviously related to $\psi_i(x_{i,j}) \in \{a_i,b_i\}$; moreover, $a_i$ is related to $\psi_i(x_{i,i})=\psi_i(x)=b_i$, hence the claim follows from the preservation of $R$ by $t$.

Similarly, $t(a_1,\ldots,a_n)$ is related in $R$ to $t(\phi_i(y_{i,1}),\ldots, \phi_i(y_{i, n}))$ for all $1\leq i\leq n$; this time, we use the fact that $\phi_i(y_{i,i})=b_i$. Hence, by~\eqref{eq-5}, we see that $\alpha_i(t(a_1,\ldots,a_n))$ is related to 
\[
\alpha_i(t(\phi_i(y_{i,1}),\ldots, \phi_i(y_{i, n})))=t(\phi_i(x_{i,1}),\ldots, \phi_i(x_{i, n}))\; .
\]

It follows that every element listed in~\eqref{eq-4} is related to an element in the orbit $O$ of $t(a_1,\ldots,a_n)$. Since $\mathscr{C}$ is a core clone, $O$ is preserved by $\cC$, and hence so is the set of its neighbours $O^+:=\{c\in C\;|\;\exists q\in O (R(q,c))\}$, since this definition is primitive positive. Since $R$ has no pseudo-loops, $O^+$ is a proper subuniverse of $\mathscr{C}$. By the above, the elements in~\eqref{eq-4} are contained in $O^+$. Since they induce $\mathbb{K}^2_{2n}$ in $R$, this contradicts our minimality assumption.
\end{proof}

Although the proof of Lemma~\ref{lem:pseudo-lcondition-recude}, which reduces the arity of pseudo-loop conditions, only works for width~$2$, the following lemma implies in particular that we can reduce the width $m$ of an arbitrary loop condition down to $2$, permitting the use of Lemma~\ref{lem:pseudo-lcondition-recude}.

\begin{lem}\label{lem:pTaylor2loopcond}
Let $\mathscr{C}$ be a closed oligomorphic core clone, and let $n\geq 1$. If $\cC$ has an $n$-ary pseudo-Taylor term, then it satisfies $pL^2_{2n}$.
\end{lem}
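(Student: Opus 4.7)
The plan is to deduce this lemma from Lemma~\ref{lem:pseudo-loop-lemma} by lifting the latter to powers of $\cC$. By Proposition~\ref{prop:ploop-lem}, in order to establish that $\cC$ satisfies $pL^2_{2n}$ it suffices to verify condition~(c) of that proposition for the pseudo-loop condition $pL^2_{2n}$: namely, that for every $N \geq 1$ and every binary relation $R \subseteq (C^N)^2$ preserved by $\cC^N$ into which $\mathbb{K}^2_{2n}$ homomorphically embeds, the relation $R$ has a pseudo-loop.

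The key observation is that $\cC^N$ itself is a closed oligomorphic core clone with an $n$-ary pseudo-Taylor term. That $\cC^N$ is a core is already noted in the Preliminaries; closedness carries over from $\cC$ because pointwise convergence on $(C^N)^k$ reduces to coordinatewise pointwise convergence on $C^k$; and oligomorphicity carries over because $\Gr(\cC^N)$ contains the diagonal action of the oligomorphic group $\Gr(\cC)$ on $C^N$, which already has finitely many orbits on every finite power $(C^N)^k = C^{Nk}$. Finally, if $t \in \cC_n$ together with unary $u_1,\dots,u_n,v_1,\dots,v_n\in\cC$ witness satisfaction of pseudo-Taylor identities of arity $n$ in $\cC$, then the same functions acting componentwise on $C^N$ lie in $\cC^N$ and witness the same identities there, since such identities transfer to powers.

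With $\cC^N$ in hand, applying Lemma~\ref{lem:pseudo-loop-lemma} to $\cC^N$ and the given relation $R \subseteq (C^N)^2$ produces the desired pseudo-loop, which in turn verifies condition~(c) of Proposition~\ref{prop:ploop-lem} and completes the proof. There is no genuine obstacle to surmount here: the substantive content sits in Lemma~\ref{lem:pseudo-loop-lemma}, and the present lemma is obtained from it by a routine structural reduction to powers.
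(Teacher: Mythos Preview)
Your proof is correct and follows essentially the same route as the paper's: verify condition~(c) of Proposition~\ref{prop:ploop-lem} by applying Lemma~\ref{lem:pseudo-loop-lemma} to the clone $\cC^N$ in place of $\cC$. The paper's version is terser and leaves the verification that $\cC^N$ inherits the hypotheses (closed, oligomorphic, core, pseudo-Taylor) implicit, whereas you spell these out; but the argument is the same.
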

\begin{proof}
  Let $N \geq 1$, let
  $R \subseteq (C^N)^2$ be preserved by $\mathscr{C}^N$, and suppose
  $\mathbb{K}^2_{2n}$ homomorphically maps into $R$. Then $R$ has a pseudo-loop by 
  Lemma~\ref{lem:pseudo-loop-lemma}. The lemma thus follows from Proposition~\ref{prop:ploop-lem}.
\end{proof}

\begin{thm}\label{thm:pTaylor2L24}
  Let $\mathscr{C}$ be a closed oligomorphic core clone which has a pseudo-Taylor term.
  Then $\mathscr{C}$ satisfies $pL^2_4$.
\end{thm}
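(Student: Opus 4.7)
The plan is to combine Lemma~\ref{lem:pTaylor2loopcond} (which turns a pseudo-Taylor term of arity $n$ into the pseudo-loop condition $pL^2_{2n}$) with Lemma~\ref{lem:pseudo-lcondition-recude} (which shrinks the clique size of pseudo-loop conditions of width $2$, down to size $4$). The only piece of bookkeeping is to check that the arities add up correctly in all cases.

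First, let $n\geq 1$ be the arity of the pseudo-Taylor term that $\mathscr{C}$ is assumed to carry. By Lemma~\ref{lem:pTaylor2loopcond}, $\mathscr{C}$ satisfies $pL^2_{2n}$. If $n\geq 2$, then $2n\geq 4$, and we apply Lemma~\ref{lem:pseudo-lcondition-recude} iteratively with $k=2n-1,\,2n-2,\,\ldots,\,4$; each step is legal since $k\geq 4$ throughout, and we conclude that $\mathscr{C}$ satisfies $pL^2_{4}$.

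The only remaining case is $n=1$, where Lemma~\ref{lem:pTaylor2loopcond} only yields $pL^2_{2}$. Here one exploits that $pL^2_2$ is already \emph{stronger} than $pL^2_4$: the identity map on $\{1,2\}$ is a homomorphism from $\mathbb{K}^2_2$ into $\mathbb{K}^2_4$, and so the pseudo-analogue of Corollary~\ref{cor:homo}, obtained by composing homomorphisms in Proposition~\ref{prop:ploop-lem}(c), transfers $pL^2_2$ to $pL^2_4$. Consequently $\mathscr{C}$ satisfies $pL^2_4$ in this case too.

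There is no real obstacle here; the theorem is essentially a bookkeeping consequence of the two preceding lemmas, and the only subtlety is handling the degenerate arity $n=1$ separately from the main iterative reduction. All nontrivial work (the syntactic composition leading to $pL^2_{2n}$, and the delicate pp-power plus orbit-minimality argument that powers the clique reduction) has already been done in Lemmas~\ref{lem:pTaylor2loopcond} and \ref{lem:pseudo-lcondition-recude}.
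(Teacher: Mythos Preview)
Your proof is correct and follows exactly the paper's approach: the paper simply states that the theorem is a direct consequence of Lemmas~\ref{lem:pTaylor2loopcond} and~\ref{lem:pseudo-lcondition-recude}, and you have spelled out the iteration explicitly. Your treatment of the degenerate case $n=1$ is fine, though in fact a closed oligomorphic core clone on a domain with more than one element cannot have a unary pseudo-Taylor term (it would force a constant unary function, contradicting the core property), so that case is vacuous in any nontrivial setting.
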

\begin{proof}
   This is a direct consequence of Corollary~\ref{cor:weakestploop} and Lemma~\ref{lem:pTaylor2loopcond}.
\end{proof}

\begin{lem}\label{lem:noh12pTaylor}
  Let $\mathscr{C}$ be a closed oligomorphic core clone which satisfies a non-trivial set of h1 identities. Then $\mathscr{C}$ has a pseudo-Taylor term.
\end{lem}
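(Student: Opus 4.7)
The plan is to adapt the classical Taylor-type argument, which produces a Taylor term from a non-trivial h1 Mal'cev condition in an idempotent variety, to the oligomorphic core setting, with the core property and closedness of $\mathscr{C}$ playing the role of idempotency.

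First, from the hypothesis that $\mathscr{C}$ satisfies a non-trivial set of h1 identities, I would derive, via an iterated composition of the operations witnessing these identities (a star-product-type construction in the spirit of the proof of Lemma~\ref{lem:big-clique}), a single non-trivial identity on one function symbol, of the form
\[
u_1 \circ t(r_{1,1},\ldots,r_{1,n}) \approx u_2 \circ t(r_{2,1},\ldots,r_{2,n}) \approx \cdots \approx u_m \circ t(r_{m,1},\ldots,r_{m,n}),
\]
where $t \in \mathscr{C}_n$ is the composite term, $u_1,\ldots,u_m$ are unary functions in $\mathscr{C}$ obtained by absorbing the ``residues'' that would cause idempotency to fail, $V$ is a finite set of variables in which the $r_{k,j}$ lie, and the non-triviality of the original h1 system guarantees that the associated relation $R := \{(r_{1,j},\ldots,r_{m,j}) : 1 \leq j \leq n\} \subseteq V^m$ contains no constant tuple.

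Next, for each coordinate $j \in \{1,\ldots,n\}$, since the column $(r_{1,j},\ldots,r_{m,j})$ is non-constant there exist $k_1 = k_1(j) \neq k_2(j) = k_2$ with $r_{k_1,j} \neq r_{k_2,j}$; I would then pick any map $h_j : V \to \{x,y\}$ satisfying $h_j(r_{k_1,j}) = x$ and $h_j(r_{k_2,j}) = y$, and substitute $h_j$ into the pair sub-identity $u_{k_1} \circ t(r_{k_1,1},\ldots,r_{k_1,n}) \approx u_{k_2} \circ t(r_{k_2,1},\ldots,r_{k_2,n})$. This yields an identity $u_{k_1} \circ t(x_{j,1},\ldots,x_{j,n}) \approx u_{k_2} \circ t(y_{j,1},\ldots,y_{j,n})$ with $x_{j,j} = x$, $y_{j,j} = y$ and all other entries in $\{x,y\}$, that is, the $j$-th pseudo-Taylor identity. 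Collecting these identities over $j = 1,\ldots,n$ exhibits $t$ as a pseudo-Taylor term of $\mathscr{C}$.

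The main obstacle lies in the first step: performing the reduction to a single-symbol identity in the absence of idempotency. In Lemma~\ref{lem:big-clique}, star composition collapses inner terms using $t(c,\ldots,c) = c$; here such collapses produce extra unary factors on the outside that must be tracked and packaged as the $u_k$'s. The closedness of $\mathscr{C}$ and the density of $\Gr(\mathscr{C})$ in its unary part are precisely what allow this packaging to remain inside $\mathscr{C}$, while the non-triviality of the original h1 system is what ensures that after this packaging the resulting relation $R$ is still loop-free. Once the reduction is established, the per-coordinate extraction in the second step is a routine substitution, based solely on the loop-freeness of $R$.
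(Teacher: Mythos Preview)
Your outline follows the same two-step strategy as the paper: a star-product to reduce to a single function symbol, then per-coordinate substitution to extract pseudo-Taylor identities. Your Step~2 is essentially identical to the paper's and is, as you say, routine once Step~1 is in hand.

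The gap is in Step~1, and it is more than a matter of bookkeeping. You propose to ``absorb the residues'' of non-idempotency into global unary functions $u_k\in\mathscr{C}$. But if you form $t=f_1\star\cdots\star f_m\star g_1\star\cdots\star g_m$ and try to isolate $f_i$ by a variable substitution $\sigma$, what you obtain globally is
\[
t\circ\sigma \;=\; D_{\mathrm{outer}}\circ f_i\circ(D_{\mathrm{inner}},\ldots,D_{\mathrm{inner}}),
\]
where $D_{\mathrm{outer}},D_{\mathrm{inner}}$ are compositions of diagonals of the other $f_j,g_j$. The inner diagonal sits \emph{inside} $f_i$ and cannot be absorbed into an outer unary; moreover the inner diagonals differ for the $f$-side and the $g$-side substitutions, so you cannot simply invoke $f_i\approx g_i$. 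The density of $\Gr(\mathscr{C})$ does not give you global inverses of these diagonals.

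The paper resolves this by a local detour that you have not articulated: fix a finite $F\subseteq C$; use the core property to pick $\alpha_i\in\Gr(\mathscr{C})$ so that the replacements $\alpha_i\circ f_i,\alpha_i\circ g_i$ are genuinely idempotent on $F$ (their diagonals agree because $f_i\approx g_i$); form $t$ from these; then the star-product collapses cleanly on $F$ and yields pseudo-Taylor identities on $F$. Since the variable pattern of these identities depends only on the shape of the original h1 system and not on $F$, the same pseudo-Taylor condition is satisfied locally on every finite set, and closedness plus the compactness argument of \cite[Lemma~4.2]{Topo} upgrades local to global satisfaction. This local-then-global structure is the missing idea in your Step~1.
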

\begin{proof}
Let 
\begin{align}
  f_1(x_1,\ldots, x_n) &\approx g_1(x_1,\ldots,x_n) \notag \\
  & \ldots \label{eq-7}\\
  f_m(x_1,\ldots, x_n) &\approx g_m(x_1,\ldots,x_n) \notag
\end{align}
be a non-trivial set of h1 identities satisfied by $\mathscr{C}$. Let $F\subseteq C$ be finite. Then there exist $\alpha_1,\ldots,\alpha_m\in\Gr(\mathscr{C})$ such that the functions $\alpha_i\circ f_i$ and $\alpha_i\circ g_i$ are idempotent on $F$, for all $1\leq i \leq m$; moreover, these functions still satisfy above identities, and so we may assume that the original functions were idempotent on $F$. Set $t:=f_1\star \cdots\star f_m\star g_1\star\cdots\star g_m$, and let $\ell:=n^{2m}$ be its arity. Then for all $1\leq i \leq m$, we have that
\begin{align}
f_i(x_1,\ldots, x_n)=u^{i,f}\circ t(z_1^{i,f},\ldots,z_\ell^{i,f})\label{eq:ft}
\end{align}
for some unary function $u^{i,f}\in\cC$ and a suitable choice of variables $z_1^{i,f},\ldots,z_\ell^{i,f}\in\{x_1,\ldots, x_n\}$; a similar statement holds for $g_i$. Hence, we have that the system of identities
\begin{align*}
  u^{1,f}\circ t(z_1^{1,f},\ldots,z_\ell^{1,f}) &\approx u^{1,g}\circ t(z_1^{1,g},\ldots,z_\ell^{1,g}) \notag \\
  & \ldots \label{eq-8}\\
  u^{m,f}\circ t(z_1^{m,f},\ldots,z_\ell^{m,f}) &\approx u^{m,g}\circ t(z_1^{m,g},\ldots,z_\ell^{m,g}) \notag 
\end{align*}
holds in $\cC$. This system is non-trivial: if $t$ could be assigned an $\ell$-ary projection, and the unary symbols $u^{i,f}, v^{i,f}$ the identity, so that the identities of the system become true equations, then also the system~\eqref{eq-7} would be satisfiable by projections, by virtue of~\eqref{eq:ft}. Non-triviality means that for every $1\leq j\leq \ell$ there exists $1\leq i\leq m$ such that $z_j^{i,f}\neq z_j^{i,g}$. Pick one such $i$ for each $j$, and call it $\phi(j)$. By repeating identities (and thus enlarging $m$), this assignment $\phi\colon \{1,\ldots,\ell\}\rightarrow\{1,\ldots,m\}$ can be made injective, by deleting identities, it can be made bijective, so then $m=\ell$. The system of identities obtained by replacing, for all $1\leq j\leq \ell$, all occurrences of the variable $z_j^{\phi(i),f}$ in the $\phi(i)$-th identity by $x$, and all other variables of that identity by $y$, is still satisfied by $t$ in $\cC$, and shows that $t$ satisfies a set of pseudo-Taylor identities on $F$. Since $F$ was arbitrary, and the shape of the pseudo-Taylor identities obtained does not depend on $F$, but only on the shape of the system of h1 identities, the same pseudo-Taylor identities are satisfied on every finite set (though possibly by different terms). The proof in~\cite[Lemma~4.2]{Topo}, already invoked in Proposition~\ref{prop:ploop-lem}, then shows that these pseudo-Taylor identities are satisfied in $\cC$.
\end{proof}

\begin{thm}\label{thm:noh122L24}
  Let $\mathscr{C}$ be a closed oligomorphic core clone which satisfies a non-trivial set of h1 identities.
  Then $\mathscr{C}$ satisfies $pL^2_4$.
\end{thm}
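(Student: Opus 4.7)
The proof is an immediate composition of the two results just established. The plan is simply to chain Lemma~\ref{lem:noh12pTaylor} and Theorem~\ref{thm:pTaylor2L24}: given the hypothesis that $\mathscr{C}$ is a closed oligomorphic core clone satisfying a non-trivial set of h1 identities, Lemma~\ref{lem:noh12pTaylor} supplies a pseudo-Taylor term in $\mathscr{C}$. Feeding this into Theorem~\ref{thm:pTaylor2L24}, which asserts that any closed oligomorphic core clone with a pseudo-Taylor term satisfies $pL^2_4$, gives the conclusion.

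There is no real obstacle to overcome at this final step: all of the substantive content lies upstream. The pseudo-Taylor term is produced by the composition trick in Lemma~\ref{lem:noh12pTaylor} (building $t = f_1 \star \cdots \star f_m \star g_1 \star \cdots \star g_m$ and choosing the right substitution so that the resulting system is non-trivial and of Taylor shape), and the reduction from a pseudo-Taylor term to the concrete pseudo-loop condition $pL^2_4$ goes via Lemma~\ref{lem:pTaylor2loopcond} (which derives $pL^2_{2n}$ from an $n$-ary pseudo-Taylor term using Lemma~\ref{lem:pseudo-loop-lemma}) followed by iterated application of Lemma~\ref{lem:pseudo-lcondition-recude} (reducing the clique size from $2n$ down to $4$). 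The present statement is, by design, the clean endpoint of this chain.

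Accordingly, I would simply write: by Lemma~\ref{lem:noh12pTaylor}, $\mathscr{C}$ has a pseudo-Taylor term, and then by Theorem~\ref{thm:pTaylor2L24}, $\mathscr{C}$ satisfies $pL^2_4$. No further computation is required.
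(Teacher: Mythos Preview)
Your proposal is correct and matches the paper's own proof exactly: the paper also states that this is a direct consequence of Lemma~\ref{lem:noh12pTaylor} and Theorem~\ref{thm:pTaylor2L24}. No further elaboration is needed.
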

\begin{proof}
This is the direct consequence of Lemma~\ref{lem:noh12pTaylor} and~\ref{thm:pTaylor2L24}.
\end{proof}

\section{Discussion}

\subsection{From local to global}

While we have derived $pL^2_4$ from any non-trivial set of h1 identities for closed oligomorphic core clones, it is known that the local satisfaction of such identities would already be sufficient to obtain $pL^2_3$. However, the proof of this fact uses the pseudo-loop lemma from~\cite{Topo,BartoPinskerDichotomyV3}, the proof of which contains a part (precisely, Lemma 3.5 in~\cite{Topo}) which is still considered non-satisfactory due to its ad hoc nature. The following theorem shows that in the case of local Taylor identities, which is a stronger assumption, we can derive $pL^2_3$ while avoiding that part.

\begin{thm}\label{thm:local2global}
  Let $\mathscr{C}$ be a closed oligomorphic core clone which
  satisfies Taylor identities locally, i.e., for every finite set $F\subseteq C$ there is a function in $\mathscr{C}$ satisfying some set of Taylor identities on $F$. Then $\mathscr{C}$
  satisfies $pL^2_3$.
\end{thm}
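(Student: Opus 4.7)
The plan is to adapt the strategy of Lemma~\ref{lem:pseudo-loop-lemma}, but to exploit the stronger hypothesis of local Taylor identities in order to tighten the combinatorics from $pL^2_{2n}$ all the way down to $pL^2_3$. By Proposition~\ref{prop:ploop-lem} applied to the pseudo-loop condition $pL^2_3$, it suffices to show that for every $N \geq 1$ and every $R \subseteq (C^N)^2$ preserved by $\mathscr{C}^N$ which admits an injective homomorphism from $\mathbb{K}^2_3$, the relation $R$ has a pseudo-loop with respect to $\mathscr{C}^N$. I first observe that local satisfaction of Taylor identities transfers from $\mathscr{C}$ to each power $\mathscr{C}^N$: a finite $F' \subseteq C^N$ determines the finite subset $F \subseteq C$ of its coordinate entries, and any Taylor term on $F$, applied componentwise, is a Taylor term on $F'$ in $\mathscr{C}^N$. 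Hence I may assume $N=1$ throughout.

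Suppose, for a contradiction, that $R \subseteq C^2$ is a counter-example. Since $\mathscr{C}$ is oligomorphic, every subuniverse of $\mathscr{C}$ is a union of orbits of $\Gr(\mathscr{C})$ and hence there are only finitely many of them. I restrict to a minimal such subuniverse in which $R$ still admits an injective homomorphism from $\mathbb{K}^2_3$, and denote it again by $C$. Fix $a,b,c \in C$ inducing $\mathbb{K}^2_3$ in $R$. Applying the local Taylor hypothesis to $F := \{a,b,c\}$ produces an $n$-ary $t \in \mathscr{C}$ satisfying Taylor identities on $F$. Since $\mathscr{C}$ is a core, the unary function $c \mapsto t(c,\ldots,c)$ agrees on $F$ with some $\alpha \in \Gr(\mathscr{C})$, and replacing $t$ by $\alpha^{-1} \circ t$ I may further assume that $t$ is idempotent on $F$ while still satisfying the same Taylor identities there.

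To derive the contradiction, I seek three elements $a', b', c'$ inducing $\mathbb{K}^2_3$ in $R$ and all lying in a common proper subuniverse of $\mathscr{C}$; such a subuniverse would contradict the minimality of $C$. The natural candidate is $O^+ := \{e \in C \;|\; \exists q \in O\; R(q,e)\}$, where $O$ is the $\Gr(\mathscr{C})$-orbit of some well-chosen value $d = t(\ldots)$ obtained from arguments in $\{a,b,c\}$; both $O$ and $O^+$ are subuniverses thanks to the core property and to the primitive positive nature of their definitions, and the absence of pseudo-loops in $R$ forces $O^+ \subsetneq C$. The elements $a',b',c'$ are to be obtained as $t$-images of suitable tuples over $\{a,b,c\}$, chosen so that each is $R$-related to $d$ via the Taylor identities on $F$, while still being pairwise $R$-related thanks to the $\mathbb{K}^2_3$ structure on $\{a,b,c\}$ together with the preservation of $R$ by $t$.

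The main obstacle is the explicit selection of those tuples and the simultaneous verification of all these requirements. With only three base elements, rather than the $2n$ available in Lemma~\ref{lem:pseudo-loop-lemma}, the combinatorial room is substantially tighter: one must invoke the Taylor identities simultaneously for all three pairs $\{a,b\}$, $\{a,c\}$, $\{b,c\}$ and combine them cleverly with the $3$-clique structure of $R$ on $\{a,b,c\}$. This is precisely where the adaptation of M.~Ol\v{s}\'{a}k's unpublished trick for $L^2_3$ in the finite case is needed, and it is the only step in the proof where local satisfaction of \emph{Taylor} identities (as opposed to merely of a non-trivial set of h1 identities) is used in an essential way, allowing us to bypass the ad hoc Lemma~3.5 of~\cite{Topo}.
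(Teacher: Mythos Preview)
Your proposal is a plan rather than a proof: you explicitly identify ``the main obstacle'' --- the explicit selection of tuples over $\{a,b,c\}$ whose $t$-images $a',b',c'$ induce $\mathbb{K}^2_3$ in $R$ and all lie in $O^+$ --- and then stop short of carrying it out. Invoking ``the adaptation of M.~Ol\v{s}\'{a}k's unpublished trick'' without executing it leaves the decisive step open. And with only three base elements (rather than the $2n$ available in Lemma~\ref{lem:pseudo-loop-lemma}), it is genuinely unclear that your descent argument can be completed: one would need, for a single choice of $d$, three $t$-values that are simultaneously pairwise $R$-related \emph{and} each $R$-related to an element of the orbit of $d$, and the Taylor identities on $\{a,b,c\}$ do not obviously furnish enough freedom for this.

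The paper's proof takes a quite different route and does not use the minimal-subuniverse/$O^+$ descent at all. After supposing $R$ has no pseudo-loop, it invokes Steps~0--4 of \cite[Lemma~3.1]{Topo} to reduce to the case where $R$ contains no \emph{diamonds} (two distinct triangles sharing an edge). Viewing the local Taylor term $t$ on $\{a,b,c\}$ as a homomorphism $(\mathbb{K}^2_3)^n\to R$, a structural result of Bulatov \cite[Claim~3, Subsection~3.2]{BulatovHColoring} then forces the image $t(\{a,b,c\}^n)$ to induce a graph isomorphic to some power $(\mathbb{K}^2_3)^m$, hence to admit a homomorphism $\phi$ back onto $\{a,b,c\}$. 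The composite $\phi\circ t$ is a Taylor operation on $\mathbb{K}^2_3$ itself, contradicting the well-known fact that every polymorphism of $\mathbb{K}^2_3$ is essentially unary. Thus the external structural input (no-diamonds reduction plus Bulatov's lemma) replaces the explicit tuple construction you were aiming for; this is exactly where the hypothesis of local \emph{Taylor} identities, rather than merely local non-trivial h1 identities, is used.
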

\begin{proof}
  Let $N\geq 1$, let $R
  \subseteq (C^N)^2$ be preserved by $\mathscr{C}^N$, and suppose that $R$ contains a homomorphic image of $\mathbb{K}^2_3$. By Proposition~\ref{prop:ploop-lem}, it is
  sufficient to show that $R$ has a pseudo-loop. Suppose that this is not the
  case.

  By the same argument as in the \emph{Steps 0 -- 4} of
  \cite[Lemma~3.1]{Topo}, we may assume that $R$ contains no diamonds, that is, there are no distinct
  $x_1, x_2, x_3, x_4 \in C^N$ such that $\{x_1, x_2, x_3\}$ and $\{x_2,x_3,
  x_4\}$ both induce $\mathbb{K}^2_3$. Pick $a, b, c \in C^N$ inducing $\mathbb{K}^2_3$  in $R$, and let $t$ be a local Taylor term on $\{a, b, c\}$. Denoting the arity of $t$ by $n$, we then have that $t$ is a homomorphism from $(\mathbb{K}^2_3)^n$
  to $R$. By
  \cite[Claim 3, Subsection 3.2]{BulatovHColoring}, the image $S$ of $\{a,b,c\}$ under $t$ then must induce a graph isomorphic to 
  $(\mathbb{K}^2_3)^m$ for some $1\leq m\leq n$, and so there is a graph
  homomorphism $\phi$ from this induced graph to the graph $\mathbb{K}^2_3$ induced by $\{a, b, c\}$. 

  Therefore, function $\phi \circ t   
   \colon \{a,b,c\}^n \to \{a, b, c\}$ preserves the relation $R$ (restricted to $\{a,b,c\}$). Since $t$ satisfies some Taylor identities on $\{a, b, c\}$, so does $\phi
  \circ t$. It is, however, well-known that all polymorphisms of $\mathbb{K}^2_3$ depend on only one variable; a contradiction.
\end{proof}

One elegant way to obtain $pL^2_4$ from the \emph{local} satisfaction of non-trivial sets of h1 identities for closed oligomorphic core  clones could be to show that this assumption implies a pseudo-Taylor term of some large arity, and then apply Theorem~\ref{thm:pTaylor2L24}. Another way would be to generalize the proof of Theorem~\ref{thm:local2global} to this more general situation, perhaps deriving a loop condition $pL^m_k$ of larger width or arity, and then applying Theorem~\ref{thm:pTaylor2L24}. We do, however, not dispose of such proofs, and leave this note as a suggestion for future work.

\subsection{Low arity loop conditions}

We have seen that for each fixed width $m\geq 3$, there exists a weakest non-trivial loop condition, namely $L^{m}_{m+1}$. All loop conditions $L^m_{k}$, where $k\geq {m+1}$ are equivalent to it; it would be interesting to know how the loop conditions $L^m_k$ for $2\leq k< {m+1}$ relate to $L^{m}_{m+1}$, and to each other. For $m=2$ this is well-understood~\cite{olsak-loop}.

\subsection{Pseudo-loop conditions}
We have shown that the pseudo-loop conditions of width~2 corresponding to graphs which are cliques of size at least 4 are all equivalent (in the context of closed oligomorphic core clones). In the case of loop conditions, M.~Ol\v{s}\'{a}k has proved the equivalence of all conditions corresponding to odd undirected cycles~\cite{olsak-loop}, which, together with the equivalence of conditions corresponding to cliques, then easily yields the equivalence of all loop conditions whose relation is a non-bipartite undirected graph. The case of undirected odd cycles remains open for pseudo-loop conditions, and the equivalence of pseudo-loop conditions whose relations is a non-bipartite undirected graph figures among the most interesting open problems in this direction.

\bibliographystyle{plain}
\bibliography{global.bib,identities.bib}
\end{document}